\newif\ifsiam     
\newif\ifnummat   
\newif\ifcvs      
\newif\ifmoc      
\newif\ifanm      
\newif\ifjcam     
\newif\ifjcm      
    \newtheorem{remark}{Remark}[section]
    \journalname{Computing and Visualization in Science}
    \DeclareMathAlphabet{\mathcal}{OMS}{cmsy}{m}{n}
    \newtheorem{theorem}{Theorem}[section]
    \newtheorem{lemma}[theorem]{Lemma}
    \newtheorem{corollary}[theorem]{Corollary}
    \theoremstyle{definition}
    \newtheorem{definition}[theorem]{Definition}
    \theoremstyle{remark}
    \newtheorem{remark}[theorem]{Remark}
    \numberwithin{equation}{section}
    \journal{Applied Numerical Mathematics}
    \newtheorem{theorem}{Theorem}
    \newtheorem{lemma}[theorem]{Lemma}
    \newtheorem{corollary}[theorem]{Corollary}
    \newtheorem{remark}{Remark}
    \journal{Journal of Computational and Applied Mathematics}
    \newtheorem{theorem}{Theorem}
    \newtheorem{lemma}[theorem]{Lemma}
    \numberwithin{equation}{section}
    \numberwithin{theorem}{section}
    \numberwithin{lemma}{section}
    \numberwithin{theorem}{section}
    \numberwithin{definition}{section}
    \numberwithin{corollary}{section}
    \numberwithin{equation}{section}
\DeclareMathOperator{\divg}{div}
\DeclareMathOperator{\grad}{grad}
\newcommand{\Th}{\mathcal{T}_{h}}
\newcommand{\CV}{\mathcal{V}}
\newcommand{\CB}{\mathcal{B}}
\newcommand{\CN}{\mathcal{N}}
\newcommand{\CE}{\mathcal{E}}
\newcommand{\CP}{\mathcal{P}}
\newcommand{\DD}{\mathcal{D}}
\newcommand{\CQ}{\mathcal{Q}}
\newcommand{\Eh}{\mathcal{E}_{h}}
\newcommand{\Nh}{\mathcal{N}_{h}}
\newcommand{\dd}[1]{\partial_{\bm{d}}#1}
\newcommand{\dtk}[1]{\partial_{\bm{t}_{k}}#1}
\newcommand{\dtkp}[1]{\partial_{\bm{t}_{k+1}}#1}
\newcommand{\dtkm}[1]{\partial_{\bm{t}_{k-1}}#1}
\newcommand{\dnk}[1]{\partial_{\bm{n}_{k}}#1}
\newcommand{\dtnk}[1]{\partial^2_{\bm{t}_k\bm{n}_{k}}#1}
\newcommand{\dte}[1]{\partial_{\bm{t}_{e}}#1}
\def\rebAuthor{Randolph E. Bank}
\def\yulAuthor{Yuwen Li}
    \def\rebShortAuthor{R.~E.~Bank}
    \def\yulShortAuthor{Y.~Li}
    \def\rebShortAuthor{R. E. Bank}
    \def\yulShortAuthor{Y. Li}
\def\rebAddress{Department of Mathematics, University of California, San Diego,
 La Jolla, California 92093-0112.}
\def\yulAddress{Department of Mathematics, University of California, San Diego,
 La Jolla, California 92093-0112.}
\def\rebEmail{rbank@ucsd.edu}
\def\yulEmail{yul739@ucsd.edu}
\def\rebThanks{The work of this author was supported by the National
Science Foundation under contracts DMS-1318480.}
\def\yulThanks{}
\title{Superconvergent recovery of Raviart--Thomas mixed finite elements on triangular grids}
\def\shortTitle{Superconvergent recovery of Raviart--Thomas elements}
\def\myKeywords{superconvergence, mildly structured grids, mixed methods, Raviart--Thomas elements, second order elliptic equations}
\def\myAMS{65N30, 65N50}
\def\myAbstract{
For the second lowest order Raviart--Thomas mixed method, we prove that the canonical interpolant and finite element solution for the vector variable in elliptic problems are superclose in the $H(\divg)$-norm on mildly structured meshes, where most pairs of adjacent triangles form approximate parallelograms. We then develop a family of postprocessing operators for Raviart--Thomas mixed elements on triangular grids by using the idea of local least squares fittings. Super-approximation property of the postprocessing operators for the lowest and second lowest order Raviart--Thomas elements is proved under mild conditions. Combining the supercloseness and super-approximation results, we prove that the postprocessed solution superconverges to the exact solution in the $L^{2}$-norm on mildly structured meshes.}
\begin{document}


\ifsiam
  \author{\rebAuthor%
         \thanks{\rebAddress {Email:}\, \rebEmail. \rebThanks}
          \and
          \yulAuthor%
         \thanks{\yulAddress {Email:}\, \yulEmail. \yulThanks}
         }
  \maketitle

  \begin{abstract}\myAbstract\end{abstract}
  \begin{keywords}\myKeywords\end{keywords}
  \begin{AMS}\myAMS\end{AMS}
  \pagestyle{myheadings}
  \thispagestyle{plain}
  \markboth{\rebShortAuthor\ and \yulShortAuthor }{\shortTitle}

\fi   


\ifnummat
   \author{\rebAuthor%
           \and
           \yulAuthor%
           }
  \institute{\rebShortAuthor : \rebAddress\, {Email:}\rebEmail \\
             \yulShortAuthor : \yulAddress\, {Email:}\yulEmail
              }
  \date{Received: April 4, 2019 \  / Accepted: date}
  \maketitle
  \begin{abstract}\myAbstract\end{abstract}
  \begin{keywords}\myKeywords\end{keywords}
  \begin{subclass}\myAMS \end{subclass}
  \markboth{\rebShortAuthor,\ \yulShortAuthor }{\shortTitle}
\fi


\ifcvs
   \author{\rebAuthor%
         \thanks{\rebShortAuthor : \rebThanks}
         \and
         \yulAuthor%
         \thanks{\yulShortAuthor : \yulThanks}
          }
  \institute{\rebShortAuthor : \rebAddress \, {Email:}\rebEmail \\ 
        \yulShortAuthor  : \yulAddress \, {Email:}\yulEmail }
  \date{Received: \today\  / Accepted: date}
  \maketitle
  \begin{abstract}\myAbstract\end{abstract}
  \begin{keywords}\myKeywords\end{keywords}
  \begin{subclass}\myAMS \end{subclass}
\fi



\ifmoc


    \author[\yulShortAuthor]{\yulAuthor}
    \address{\yulAddress}
    \email{\yulEmail}

    \subjclass[2010]{Primary \myAMS}
    \date{\today}
    \begin{abstract}\myAbstract\end{abstract}
    \title[Superconvergence of HHJ and Morley elements]{Superconvergence of Hellan--Herrmann--Johnson and Morley elements on quasi-uniform and graded meshes}
    \maketitle
\fi


\ifanm
  \begin{frontmatter}

  \author{\rebAuthor\fnref{fn1}}
  \address{\rebAddress\, {Email:}\rebEmail}
  \fntext[fn1]{\rebThanks} 

  \author{\yulAuthor\corref{cor1}\fnref{nf2}}
  \address{\yulAddress\, {Email:}\yulEmail} 
   \cortext[cor1]{Corresponding Author}
  \fntext[fn2]{\yulThanks} 

  \begin{abstract}\myAbstract\end{abstract}
  \begin{keyword}\myKeywords\end{keyword}

  \end{frontmatter}

\fi


\ifjcam
  \begin{frontmatter}

  \author{\rebAuthor\fnref{fn1}}
  \address{\rebAddress\, {Email:}\rebEmail}
  \fntext[fn1]{\rebThanks} 

  \author{\yulAuthor\corref{cor1}\fnref{nf2}}
  \address{\yulAddress\, {Email:}\yulEmail} 
   \cortext[cor1]{Corresponding Author}
  \fntext[fn2]{\yulThanks} 

  \begin{abstract}\myAbstract\end{abstract}
  \begin{keyword}\myKeywords\end{keyword}

  \end{frontmatter}

\fi


\ifjcm
 \markboth{\rebShortAuthor and \yulShortAuthor}
          {\shortTitle}

\author{\rebAuthor\footnote{\rebThanks}\thanks{\rebAddress \\ Email: \rebEmail}
\and
        \yulAuthor\footnote{\yulThanks}\thanks{\yulAddress \\ Email: \yulEmail}}

\maketitle

  \begin{abstract}\myAbstract\end{abstract}
  \begin{classification}\myAMS\end{classification}
  \begin{keywords}\myKeywords\end{keywords}
\fi



\section{Introduction and preliminaries}\label{sec1}
Gradient recovery methods for Lagrange elements have been studied extensively by many authors, see, e.g., \cite{ZZ1992,ZZ1992b,BX2003a,BX2003b,BaXuZheng2007,XZ2003,ZhangNaga2005,WuZhang2007} and references therein. Let $u$ be the exact solution of Poisson's equation and $u_{h}$ be the finite element solution from Lagrange elements. In general $\nabla u_{h}$ rather than $u_{h}$ is the main quantity of interest. Gradient recovery methods aim to get a new approximation $\bm{p}_h$ to $\nabla u$ by postprocessing $u_{h}$ or $\nabla u_{h}$. Comparing to $\nabla u_{h}$, $\bm{p}_h$ is often $H^{1}$-conforming and $\bm{p}_h$ superconverges to $\nabla u$ in some situation. In addition, $\bm{p}_h$ can be used to develop a posteriori error estimators. The recovery-based a posteriori error estimators are popular for their simplicity and asymptotic exactness, see, e.g., \cite{ZZ1992b,BX2003b,XZ2003}.

To derive recovery-type superconvergence, a common ingredient is the so-called supercloseness estimate showing that the canonical interpolant and finite element solution are superclose in some norm. In this paper, we consider the second lowest order Raviart--Thomas (denoted by $RT_1$) mixed method for the second order elliptic equation, namely, \eqref{RTmix:dv} with $r=1$. We shall prove that the canonical interpolant $\Pi_h^1\bm{p}$ and the finite element solution $\bm{p}_h^1$ are superclose in the $H(\divg)$-norm under mildly structured grids, i.e., most pairs of adjacent triangles in grids form $O(h^{1+\alpha})$-approximate parallelograms except for a region with measure $O(h^{\beta})$, see Definitions \ref{approxpara1} and \ref{alphasigma}. The supercloseness result in this paper generalizes a result for the $RT_0$ mixed method in \cite{YL2018}. For Poisson's equation, Brandts \cite{Brandts2000} proved a supercloseness estimate for $RT_1$ on three-line grids, i.e., each edge in grids is parallel to one of three fixed lines. 

To relax the restriction on mesh structures in supercloseness analysis, we give a constructive proof for Theorem \ref{RTsuperclosep} instead of using the odd-even argument and the Bramble--Hilbert lemma employed in \cite{Brandts1994,Brandts2000}. For Lagrange elements over $(\alpha,\beta)$-grids, the authors in \cite{BX2003a} transferred the local error $\int_T\nabla(u-u_I)\cdot\nabla v_{h}$ on each element $T$ to line integrals using the divergence theorem, where $u_I$ is the linear Lagrange interpolant. Then line integrals are grouped in terms of tangential components of $\nabla v_{h}$ by delicate triangular integral identities. However, it's not clear how to handle the local error 
$\int_T(\bm{p}-\Pi_h^r\bm{p})\cdot\bm{q}_{h}$ for the $RT_r$ element in a similar fashion. Our key observation is that $RT_r$ elements satisfy the divergence-free property, i.e., $\divg(\bm{p}_{r+1}-\Pi_h^r\bm{p}_{r+1})=0$ on each triangle $T$ provided $\bm{p}_{r+1}\in\mathcal{P}_{r+1}(T)^2$. Hence $\bm{p}_{r+1}-\Pi_h^r\bm{p}_{r+1}=\nabla^\perp w_{r+2}$ for some  $w_{r+2}\in\mathcal{P}_{r+2}(T)$ and it can be handled by Green's theorem, see Section \ref{Proof}. 

For mixed methods, the finite element solution $\bm{p}_{h}$ approximating the vector variable $\bm{p}\in H(\divg,\Omega)$ is the main quantity of physical interest. As far as we know, existing postprocessing/recovery techniques for $\bm{p}$ and $\bm{p}_h$ are restricted to strongly structured grids, e.g., three-line, translation invariant and rectangular grids, see, e.g., \cite{DM1985,Duran1990,DK1998,Brandts2000}. As grids become increasingly unstructured, the rate of superconvergence of $\|\bm{p}-K_h\Pi_h\bm{p}\|_{0,\Omega}$ deteriorates, where $\Pi_h$ is the canonical interpolation and $K_h$ is some postprocessing operator. In addition, most of the existing results of recovery methods focus on the lowest order case while the analysis of recovery operators for higher order elements is limited, especially on irregular grids. In this paper, we construct a new family of recovery operators $R_h^r$ for $RT_r$ ($r\geq0$) elements by fitting the numerical solution $\bm{p}_{h}$ with a vector polynomial of degree $r+1$ in the least squares(LS) sense on each local patch surrounding each vertex in triangular grids. We shall show that $R_h^0$ and $R_h^1$ have nice super-approximation property under mild and easy-to-check conditions. The order of approximation of $R_h^r$ is almost independent of the mesh structure. Combining the supercloseness and $R_h^r$, we finally obtain the superconvergence of the postprocessed $RT_{0}$ and $RT_{1}$ solutions to the exact solution, see Theorem \ref{RTsuper}.

Recovery by local least squares fitting is not a new idea. The famous Zienkiewicz--Zhu(ZZ) superconvergent patch recovery $G_h$ is based on it, see, e.g., \cite{ZZ1992,ZZ1992b}. For linear elements, $\|\nabla u-G_h\nabla u\|=O(h^{2})$ under strongly regular grids (see \cite{LiZhang1999}), that is, each pair of adjacent triangles form an $O(h^{2})$ approximate parallelogram. Alternatively, Zhang and Naga \cite{ZhangNaga2005} proposed a different LS-based patch recovery operator $G_h^r$ for Lagrange elements of degree $r$ by postprocessing the scalar function $u$ rather than $\nabla u$. Roughly speaking, $\|\nabla u-G_h^ru\|=O(h^{r+1})$ provided each LS problem has 
a unique solution on each local patch. $R_h^r$ can be viewed as a Raviart--Thomas version of $G_h^{r+1}$. In practice, the excellent superconvergence property of $G_h^r$ is attributed to the unique solvability of vertex-based LS problems, which is difficult to prove on unstructured grids. For example, \cite{NZ2004} is mainly devoted to the analysis of the uniqueness of the LS solution for $G_h^1$ on unstructured grids. As far as we know, there is no similar analysis for $G_h^r$ with $ r\geq2$. We shall give a practical criterion of uniqueness for $G_h^2$ on unstructured grids, which also works for $R_h^1$, see Theorem \ref{RTuniqueness}.

In this paper, we consider the second order elliptic equation
\begin{subequations}\label{RTmix:c1}
\begin{align}
-\divg(a_2(\bm{x})\nabla u+\bm{a}_1(\bm{x})u)+a_0(\bm{x})u&=f(\bm{x}),\quad \bm{x}\in\Omega,\\
u&=g(\bm{x}),\quad \bm{x}\in\partial\Omega,
\end{align}
\end{subequations}
where $\divg=\nabla\cdot$ is the divergence operator, $a_2, a_0$ are scalar-valued and $\bm{a}_1$ is vector-valued, $\Omega\subset\mathbb{R}^2$ is a bounded and simply-connected Lipschitz domain. Assume that $a_2, \bm{a}_1, a_0$ are sufficiently smooth on $\overline{\Omega}$ and $a_2\geq\Lambda>0$ for some constant $\Lambda.$
Let
\begin{equation*}
\begin{aligned}
\bm{p}&=a_2\nabla u+\bm{a}_1u,\\
a&=a_2^{-1},\quad\bm{b}=a_2^{-1}\bm{a}_1,\quad c=a_0.
\end{aligned}
\end{equation*}
Equation
\eqref{RTmix:c1} is equivalent to the first order system
\begin{subequations}\label{RTmix:c}
\begin{align}
a\bm{p}-\bm{b}u-\nabla u&=0,\quad\bm{x}\in\Omega,\\
-\divg\bm{p}+cu&=f,\quad\bm{x}\in\Omega,\\
u&=g,\quad\bm{x}\in\partial\Omega.
\end{align}
\end{subequations}
Let 
$\CQ=H(\divg,\Omega):=\{\bm{q}\in L^{2}(\Omega)^{2}:
\divg\bm{q}\in L^{2}(\Omega)\}$ and $\mathcal{V}=L^{2}(\Omega).$
The mixed formulation for \eqref{RTmix:c} is to find the pair $\{\bm{p},u\}\in\mathcal{Q}\times\mathcal{V}$, such that
\begin{subequations}\label{RTmix:v}
\begin{align}
(a\bm{p},\bm{q})-(\bm{q},\bm{b}u)+(\divg\bm{q},u)&=\langle\bm{q}\cdot\bm{n},g\rangle,\label{RTmix:va}\\
-(\divg\bm{p},v)+(cu,v)&=(f,v),
\end{align}
\end{subequations}
for each pair $\{\bm{q},v\}\in\mathcal{Q}\times\mathcal{V}$. Here $\langle\cdot,\cdot\rangle$ denotes the $L^2$-inner product on $\partial\Omega.$

Let $\Th$ be a collection of triangles that forms a triangulation of $\Omega.$ Let $h_T=|T|^\frac{1}{2}$ be the diameter of $T$, where $|T|$ is the area of $T$. Let $h=\max_{T\in\Th} h_T<1$ be the mesh-size. $\Th$ is assumed to quasi-uniform, namely, $\max_{T\in\Th}h_T\leq C_0(\min_{T\in\Th}h_T)$ for some generic constant $C_0$. The quasi-uniformity implies the minimum angle condition (MAC), namely, there exists a fixed constant $\Theta>0$, such that $\theta\geq\Theta>0$ for any angle $\theta$ of any triangle $T\in\Th$. Given a one-dimensional or two-dimensional subset $U\subset\mathbb{R}^{2}$, let
$$\CP_r(U)=\{v: v\ \text{is\ a\ polynomial\ on } U \text{ of\ degree }\leq r\}$$
denote the space of  polynomials of degree $\leq r.$ Let $\Eh, \Eh^{o}, \Eh^{\partial}$ denote the set of edges, interior edges and boundary edges in $\Th$ , respectively. Let $\Nh$ denote the set of vertices in $\Th$. Several kinds of local patches are useful for finite element superconvergence analysis. For $\bm{z}\in\CN_h,$ let $\omega_z$ be the union of triangles in $\Th$ sharing $z$ as a vertex. For $e\in\CE_h,$ let $\omega_e$ be the union of triangles in $\Th$ sharing $e$ as an edge. For $T\in\Th,$ let $\omega_T$ be the union of $T$ and triangles in $\Th$ sharing at least one vertex with $T$. The local nodes, edges , and triangles in $U$ are $\Nh(U)=\{z\in\Nh: z\in\bar{U}\}$, $\Eh(U)=\{e\in\Eh: e\subset\bar{U}\}$, and $\Th(U)=\{T\in\Th: T\subset\bar{U}\}$, respectively. 

For $r\geq0$ and $T\in\Th$, define the space of shape functions
\begin{equation}\label{formRT}
\mathcal{RT}_r(T):=\left\{\begin{pmatrix}v_1\\v_2\end{pmatrix}+v_3\begin{pmatrix}x_1\\x_2\end{pmatrix}: v_i\in\CP_r(T), i=1,2,3\right\}.
\end{equation}
The $RT_r$ finite element spaces are
\begin{displaymath}
\begin{aligned}
&\mathcal{Q}_{h}^r:=\left\{\bm{q}_{h}\in\mathcal{Q}: \bm{q}_{h}|_T\in\mathcal{RT}_r(T),\ \forall T\in\mathcal{T}_{h}\right\},\\
&\mathcal{V}_{h}^r:=\{v_{h}\in\mathcal{V}: v_{h}|_T\in\mathcal{P}_r(T),\ \forall T\in\mathcal{T}_{h}\}.
\end{aligned}
\end{displaymath} 
The mixed method for \eqref{RTmix:v} is to find $\{\bm{p}_{h}^r,u_{h}^r\}\in\mathcal{Q}_{h}^r
\times\mathcal{V}_{h}^r$, such that
\begin{subequations}\label{RTmix:dv}
\begin{align}
(a\bm{p}_{h}^r,\bm{q}_{h})-(\bm{q}_{h},\bm{b}u_{h}^r)
+(\divg\bm{q}_{h},u_{h}^r)&=\langle\bm{q}_{h}\cdot\bm{n},g\rangle,&&\quad\bm{q}_{h}\in\mathcal{Q}_{h}^r,\label{RTmix:dva}\\
-(\divg\bm{p}_{h}^r,v_{h})+(cu_{h},v_{h})&=(f,v_{h}),&&\quad v_{h}\in\mathcal{V}_{h}^r.
\end{align}
\end{subequations}
Under mild assumptions, Douglas and Roberts \cite{DR1985} proved the well-posedness and a priori error estimates for the method \eqref{RTmix:dv}.

Given a positive integer $s$ and a sufficiently smooth function $v$, let 
$$|D^sv|:=\sum_{\alpha_1+\alpha_2=s}\left|\frac{\partial^{\alpha_1+\alpha_2}}{\partial x_1^{\alpha_1}\partial x_2^{\alpha_2}}v\right|.$$
For a domain $U$, the Sobolev seminorms and norms are defined by
\begin{equation*}
\begin{aligned}
&|v|_{s,p,U}=\big(\int_U|D^sv|^{p}\big)^{\frac{1}{p}},\quad
\|v\|_{s,p,U}=\big(\sum_{m=0}^s|v|_{m,p,U}^{p}\big)^{\frac{1}{p}},\\
&|v|_{m,U}=|v|_{m,2,U},\quad\|v\|_{m,U}=\|v\|_{m,2,U},
\end{aligned}
\end{equation*}
Sobolev norms with $\infty$-index and norms of vector-valued functions are generalized in usual ways. 

Let $|v|_{h,m,U}:=\big(\sum_{T\in\Th}|v|_{m,T}^2\big)^\frac{1}{2}$ denote the mesh-dependent semi-norm w.r.t.~$\Th$. We say $A\lesssim B$ provided $A\leq CB,$ where $C$ is a generic constant that may change from line to line, and depends only on the shape regularity of $\Th$ measured by $C_0$ or $\Theta.$ We say $A\approx B$ if $A\lesssim B$ and $B\lesssim A.$ The regularity condition will be indicated on right hand sides of estimates. {In addition to $\CQ_h^r$ and $\CV_h^r$, we need the standard nodal finite element space
\begin{align*}
    \mathcal{W}_h^r=\{w\in C(\Omega): w|_T\in\mathcal{P}_r(T),\ \forall T\in\Th\},
\end{align*}
where $C(\Omega)$ is the space of continuous functions on $\Omega$.} We present two well-known inequalities that will be used in the rest of this paper. 
\begin{theorem}[Interpolation error]
{Let $I_h^r: C(\Omega)\rightarrow\mathcal{W}_h^r$ denote the Lagrange interpolation of degree $r$.} For $T\in\Th$ and $r\geq1,$ it holds that
\begin{equation}\label{feinterp}
\|v-I_h^rv\|_{0,\gamma,T}\lesssim 
h^{r+\frac{2}{\gamma}}|v|_{h,r+1,T},\quad1\leq \gamma\leq\infty.
\end{equation} 
\end{theorem}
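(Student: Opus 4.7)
The bound is the classical Bramble--Hilbert interpolation estimate, so the natural route is: (i) pull back to a fixed reference triangle, (ii) apply Bramble--Hilbert there, and (iii) scale back, using Hölder's inequality to upgrade the resulting $L^{\infty}$-bound to an $L^{\gamma}$-bound. The slightly unusual feature is the mismatch in integrability indices on the two sides of the inequality, and that mismatch is precisely what produces the factor $h^{2/\gamma}$.

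First, I would fix a reference triangle $\hat T$ and, for each $T\in\Th$, let $F_{T}:\hat T\to T$ be the affine map with Jacobian $B_{T}$. Quasi-uniformity and the minimum angle condition give $\|B_{T}\|\lesssim h$, $\|B_{T}^{-1}\|\lesssim h^{-1}$, and $|\det B_{T}|\approx h^{2}$. Setting $\hat v(\hat x)=v(F_{T}(\hat x))$, the affine-equivalence of Lagrange interpolation yields $\widehat{I_{h}^{r}v}=\hat I^{r}\hat v$, where $\hat I^{r}$ denotes Lagrange interpolation on $\hat T$. A routine change-of-variables calculation then gives the two scalings $\|v-I_{h}^{r}v\|_{0,\infty,T}=\|\hat v-\hat I^{r}\hat v\|_{0,\infty,\hat T}$ and $|\hat v|_{r+1,\hat T}\approx h^{r}|v|_{r+1,T}$.

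Next, on $\hat T$ I would invoke the two-dimensional Sobolev embedding $H^{r+1}(\hat T)\hookrightarrow C(\hat T)$, which requires $r+1>1$; this is exactly why the hypothesis $r\geq 1$ is imposed. The embedding makes $\hat I^{r}:H^{r+1}(\hat T)\to \mathcal{P}_{r}(\hat T)$ a well-defined bounded linear operator that reproduces $\mathcal{P}_{r}(\hat T)$, so the Bramble--Hilbert lemma yields
$$
\|\hat v-\hat I^{r}\hat v\|_{0,\infty,\hat T}\lesssim |\hat v|_{r+1,\hat T}.
$$
Transporting back to $T$ via Step~1 produces $\|v-I_{h}^{r}v\|_{0,\infty,T}\lesssim h^{r}|v|_{r+1,T}$.

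Finally, for general $\gamma\in[1,\infty]$, Hölder's inequality on the single triangle $T$ gives
$$
\|w\|_{0,\gamma,T}\leq |T|^{1/\gamma}\|w\|_{0,\infty,T}\lesssim h^{2/\gamma}\|w\|_{0,\infty,T},
$$
and setting $w=v-I_{h}^{r}v$ closes the argument. There is no genuine obstacle here beyond careful bookkeeping: combining the $h^{r}$ from the seminorm scaling with the $h^{2/\gamma}$ from Hölder yields precisely $h^{r+2/\gamma}$, and the case $\gamma=\infty$ (with the convention $2/\infty=0$) recovers the usual $L^{\infty}$ interpolation bound.
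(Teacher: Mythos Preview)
Your argument is correct and is the standard Bramble--Hilbert/scaling proof of this classical estimate. The paper itself does not prove this statement at all: it is introduced as one of ``two well-known inequalities that will be used in the rest of this paper'' and simply stated without proof, so there is no paper proof to compare against.
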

\begin{theorem}[Trace inequalities ]
For $T\in\Th$ and $v\in H^1(T)$, it holds that
\begin{equation}\label{trace}
\begin{aligned}
    \|v\|_{0,\partial T}\lesssim h_T^{-\frac{1}{2}}\|v\|_{0,T}+h_T^{\frac{1}{2}}\|\nabla v\|_{0,T}.
\end{aligned}
\end{equation}
\end{theorem}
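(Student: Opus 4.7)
The plan is to reduce the estimate to a scaling-invariant inequality on a fixed reference triangle and then rescale. First I would fix a reference triangle $\hat T$ of unit size and, for each $T\in\Th$, introduce the affine map $F_T:\hat T\to T$, $F_T(\hat{\bm{x}})=B_T\hat{\bm{x}}+\bm{b}_T$, together with the pulled-back function $\hat v(\hat{\bm{x}}):=v(F_T(\hat{\bm{x}}))$. Under the minimum angle condition implied by quasi-uniformity, the Jacobian satisfies $\|B_T\|\approx h_T$, $\|B_T^{-1}\|\approx h_T^{-1}$, and $|\det B_T|\approx h_T^2$, with hidden constants depending only on $\Theta$.

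Next I would record the three standard scaling relations that will be used. A change of variables gives
$$\|v\|_{0,T}^2\approx h_T^{2}\|\hat v\|_{0,\hat T}^2,\qquad \|\nabla v\|_{0,T}^2\approx \|\nabla\hat v\|_{0,\hat T}^2,$$
where in the gradient relation the factor $|\det B_T|\approx h_T^2$ is cancelled by $\|B_T^{-1}\|^2\approx h_T^{-2}$. A separate computation on each edge of $\partial T$, using that the surface Jacobian scales like $h_T$ rather than $h_T^2$, gives $\|v\|_{0,\partial T}^2\approx h_T\,\|\hat v\|_{0,\partial\hat T}^2$.

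Then I would invoke the classical trace theorem on the fixed Lipschitz domain $\hat T$, namely the existence of a constant depending only on $\hat T$ such that $\|\hat v\|_{0,\partial\hat T}\lesssim \|\hat v\|_{0,\hat T}+\|\nabla\hat v\|_{0,\hat T}$ for all $\hat v\in H^1(\hat T)$. Squaring this, multiplying by $h_T$, and inserting the scaling relations from the previous step yields
$$\|v\|_{0,\partial T}^2\lesssim h_T\bigl(\|\hat v\|_{0,\hat T}^2+\|\nabla\hat v\|_{0,\hat T}^2\bigr)\lesssim h_T^{-1}\|v\|_{0,T}^2+h_T\|\nabla v\|_{0,T}^2,$$
and taking square roots produces \eqref{trace}.

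There is no substantive obstacle: the argument is essentially a bookkeeping exercise. The only point requiring care is the scaling of the boundary norm, where one should work edge-by-edge so that the $1$-dimensional surface Jacobian of order $h_T$ is tracked correctly and not confused with the area Jacobian of order $h_T^2$. An alternative route that avoids a reference element is to apply the divergence theorem to the vector field $\bm{\beta}(\bm{x})=\bm{x}-\bm{x}_c$ with $\bm{x}_c$ the incenter of $T$, expand $\int_{\partial T}v^2\,\bm{\beta}\cdot\bm{n}\,ds=\int_T\divg(v^2\bm{\beta})\,d\bm{x}$, and use $\bm{\beta}\cdot\bm{n}\gtrsim h_T$ on $\partial T$ together with $\|\bm{\beta}\|_{0,\infty,T}\lesssim h_T$; this yields the same estimate with a shape-regular constant, but the scaling proof appears cleaner here.
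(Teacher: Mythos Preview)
Your argument is correct: the affine pull-back to a reference triangle combined with the classical trace theorem on $\hat T$ and the standard scaling identities gives exactly \eqref{trace}, and you have tracked the powers of $h_T$ correctly (in particular the surface Jacobian of order $h_T$ versus the area Jacobian of order $h_T^2$). The alternative divergence-theorem route you sketch is also valid.

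There is nothing to compare with, however: the paper does not prove this statement. It is listed, together with the interpolation error estimate \eqref{feinterp}, under the sentence ``We present two well-known inequalities that will be used in the rest of this paper,'' and no argument is supplied. So your proposal simply fills in a standard proof that the authors chose to omit.
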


\section{Local error expansions}\label{LEE}
\begin{figure}[tbhp]
\centering
\includegraphics[width=13.0cm,height=4.0cm]{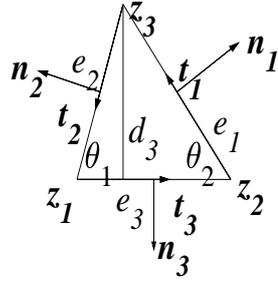}
\caption{A local triangle $T$ and associated quantities.}
\label{triangle}
\end{figure}

We begin with geometric identities on a local element $T$. It has three vertices $\{z_{k}\}_{k=1}^{3}$, oriented counterclockwise,
and corresponding barycentric coordinates $\{\lambda_{k}\}_{k=1}^{3}$.
Let $e_{k}$ denote the edge opposite to $z_{k}$, $\theta_{k}$ the angle opposite to $e_{k}$, $\ell_{k}$ the length of $e_{k}$, $d_{k}$ the distance from $z_{k}$ to $e_{k}$,
$\bm{t}_{k}$ the unit tangent to $e_{k}$, oriented counterclockwise, $\bm{n}_{k}$ the unit outward normal
to $e_{k}$, $\dtk$ the tangential derivative, $\dnk$ the normal derivative, and $\dtnk$ the second mixed derivative, see Fig.~\ref{triangle}. Corresponding quantities on triangles $T^{\prime}$ and $T^{\prime\prime}$ have superscripts $\prime$ and $\prime\prime$ respectively. The subscripts are equivalent mod 3, e.g., $\ell_4=\ell_1, \theta_0=\theta_3.$

We have the rotational gradient 
$\nabla^\perp v=\left(-\partial_{x_2} v,\partial_{x_1}v\right)^\intercal$,
and the adjoint 
$\nabla\times\bm{q}=\partial_{x_1}q_2-\partial_{x_2} q_1.$
$\nabla^\perp$ and $\nabla\times$ are related by Green's formula
\begin{equation}\label{IP}
\int_T\nabla^\perp w\cdot\bm{q}=\int_{\partial T}w\bm{q}\cdot\bm{t}-\int_Tw\nabla\times\bm{q},
\end{equation}
where $\bm{t}$ is the unit tangent to $\partial K$ oriented counterclockwise. For $\bm{v}\in\mathbb{R}^2$, define $\bm{v}^\perp=(-v_2,v_1)$. Clearly, $\bm{n}_{k}^\perp=\bm{t}_{k},$ $\bm{t}_{k}^\perp=-\bm{n}_{k}.$ 

Now we introduce basic definitions for $RT_r$ elements. For $e\in\Eh,$ let $\{(w_j,\bm{g}_{j})\}_{j=1}^{r+1}$ denote the Gaussian quadrature rule on $e$, where $\{\bm{g}_j\}$ are quadrature points and $\{w_{j}\}$ are corresponding weights. $\{(w_j,\bm{g}_{j})\}_{j=1}^{r+1}$ is exact for $\mathcal{P}_{2r+1}(e)$, i.e.,
\begin{equation}\label{quad}
\int_ev=|e|\sum_{j=1}^{r+1}w_jv(\bm{g}_j)\text{ for all }v\in\mathcal{P}_{2r+1}(e),   
\end{equation}
where $|e|$ is the length of $e.$
Let
$v_{j}\in\mathcal{P}_r(e)$ be the polynomial that is $w_{j}^{-1}$ at $\bm{g}_{j}$ and $0$ at the rest of quadrature points. For $T\in\Th,$ let $\{\lambda_l\}_{l=1}^{r(r+1)/2}$ be the nodal basis function of Lagrange elements of degree $r-1$ on $T$ ($\{\lambda_l\}=\emptyset$ if $r=0$; $\{\lambda_l\}=\{1\}$ if $r=1$). We can specify degrees of freedom of $RT_r$ elements as 
\begin{equation*}
\mathcal{N}_{e}^{j}(\bm{q}):=\frac{1}{|e|}\int_{e}\bm{q}\cdot\bm{n}_{e}v_{j},\quad
\mathcal{N}_T^{lm}(\bm{q}):=\frac{1}{|T|}\int_Tq_m\lambda_l,
\end{equation*}
where $\bm{n}_e$ is a unit normal to $e$, $\bm{q}=(q_1,q_2)^\intercal$, and $1\leq j\leq r+1$, $1\leq l\leq r(r+1)/2, m=1,2.$ {\color{blue}By \eqref{quad} and the definition of $v_j$, we have $\mathcal{N}_{e}^j(\bm{q})=\bm{q}(\bm{g}_j)\cdot\bm{n}_{e}$ provided $\bm{q}\in\mathcal{P}_{r+1}(e)^2$.}
For $\bm{q}\in H^1(\Omega)^2$, the $RT_r$ interpolant $\Pi_{h}^r\bm{q}\in\CQ_h^r$ satisfies 
\begin{equation*}
\begin{aligned}
\mathcal{N}_e^j(\Pi_{h}^r\bm{q})&=\mathcal{N}_e^j(\bm{q}),\quad\mathcal{N}_T^{lm}(\Pi_{h}^r\bm{q})=\mathcal{N}_T^{lm}(\bm{q}),
\end{aligned}
\end{equation*}
for all indices $j,l,m$, and $e\in\Eh, T\in\Th$. 
The existence and uniqueness of $\Pi_h^r\bm{q}$ is always guaranteed. In addition, $\Pi_h^r$ is stable in the $L^\infty$-norm
\begin{equation}\label{Linfty}
\|\Pi_h^k\bm{q}\|_{0,\infty,T}\lesssim\|\bm{q}\|_{0,\infty,T},\quad T\in\Th.
\end{equation}
For $v\in\mathcal{V}$, the interpolant $P_{h}^rv$ is the $L^{2}$-projection of $v$ onto $\mathcal{V}_{h}^r$.
There is a nice commuting property about $P^r_{h}$, $\Pi^r_{h}$ and $\divg$, i.e.,
\begin{equation}\label{commutative}
\divg(\Pi_h^r\bm{q})=P_h^r(\divg\bm{q}),\quad\forall\bm{q}\in H^1(\Omega)^2.
\end{equation}
The following interpolation error estimates hold, see, e.g., \cite{DR1985}.
\begin{subequations}\label{approxRT}
\begin{align}
&\|\bm{q}-\Pi_{h}^r\bm{q}\|_{0,\Omega}\lesssim h^{r+1}|\bm{q}|_{h,r+1,\Omega},\label{approxRTa}\\
&\|\divg(\bm{q}-\Pi_{h}^r\bm{q})\|_{0,\Omega}\lesssim h^{r+1}|\divg\bm{q}|_{h,r+1,\Omega},\\
&\|v-P_{h}^rv\|_{0,\Omega}\lesssim h^{r+1}|v|_{h,r+1,\Omega}.
\end{align}
\end{subequations}

In the rest of this section, we will present variational error expansions for the $RT_1$ element. Comparing to $RT_0$, the theory of $RT_1$ is much more complicated. Let $d$ be the diameter of the circumscribed circle of $T$. For each edge $e_k$, there are several associated geometric quantities 
\begin{equation*}
\begin{aligned}
&\mu^1_{11,k}
=\frac{1}{5760}\big(3\ell_{k}^{4}-3(\ell_{k-1}^{2}-\ell_{k+1}^{2})^{2}
-4\ell_{k}^{2}(\ell_{k-1}^{2}+\ell_{k+1}^{2})\big),\\
&\mu^1_{12,k}=\mu^1_{21,k}=\frac{1}{1440d}
\ell_{1}\ell_{2}\ell_{3}(\ell_{k-1}^{2}-\ell_{k+1}^{2}),
\quad\mu^1_{22,k}=-\frac{1}{1440d^{2}}
\ell_{1}^{2}\ell_{2}^{2}\ell_{3}^{2},\\
&\mu^2_{11,k}
=\frac{1}{2880\ell_{1}\ell_{2}\ell_{3}}d(\ell_{k-1}^{2}-\ell_{k+1}^{2})\big(4\ell_k^2-(\ell_{k-1}^{2}-\ell_{k+1}^{2})^{2}
-3\ell_{k}^{2}(\ell_{k-1}^{2}+\ell_{k+1}^{2})\big),\\
&\mu^2_{12,k}=\mu^2_{21,k}
=-\mu^1_{11,k},\quad
\mu^2_{22,k}=-\mu^1_{12,k},
\end{aligned}
\end{equation*}
and second order differential operators $\{\DD_{i,k}^{jl}\}_{1\leq i,j,l\leq2}$
\begin{equation*}
    \begin{aligned}
    \DD_{1,k}^{11}&=\bm{t}_k\cdot\dtk^2,\quad\DD_{1,k}^{12}=\DD_{1,k}^{21}=\bm{t}_k\cdot\dtnk,\quad\DD_{1,k}^{22}=\bm{t}_k\cdot\dnk^2,\\
    \DD_{2,k}^{11}&=\bm{n}_k\cdot\dtk^2,\quad\DD_{2,k}^{12}=\DD_{2,k}^{21}=\bm{n}_k\cdot\dtnk,\quad\DD_{2,k}^{22}=\bm{n}_k\cdot\dnk^2.
    \end{aligned}
\end{equation*}
We define the second order differential operator $\CB_k(\bm{q}):=\sum_{i,j,l=1}^2\mu_{jl,k}^i\DD_{i,k}^{jl}(\bm{q}).$ The next lemma is our main tool for estimating the global variational error whose proof is left in Section \ref{Proof}.
\begin{lemma}\label{RT1err2}
For $\bm{p}_2\in\CP_2(T)^2$ and $w_{2}\in\mathcal{P}_{2}(T)$,
\begin{equation*}
\begin{aligned}
\int_T(\bm{p}_{2}-\Pi_h^1\bm{p}_{2})\cdot\nabla^\perp w_{2}=\sum_{k=1}^{3}\int_{e_k}\CB_k(\bm{p}_2)\dtk^2{w_{2}}.
\end{aligned}
\end{equation*}
\end{lemma}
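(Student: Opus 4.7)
The plan is to exploit the divergence-free property of the interpolation error to write it as a rotational gradient, reduce to a boundary integral via Green's theorem, and then carry out tangential integration by parts on each edge.

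Since $\divg\bm{p}_2\in\mathcal{P}_1(T)$ is preserved by $P_h^1$, the commuting property \eqref{commutative} yields $\divg(\bm{p}_2-\Pi_h^1\bm{p}_2)=0$ on the simply connected $T$. Hence there exists $\psi\in\mathcal{P}_3(T)$, unique up to an additive constant, with $\bm{p}_2-\Pi_h^1\bm{p}_2=\nabla^\perp\psi$. A direct check (comparing the boundary term to $c\int_T\Delta w_2$) shows that both sides of the desired identity are invariant under $\psi\mapsto\psi+c$, so we may normalize $\int_T\psi=0$. Using $\nabla^\perp\psi\cdot\nabla^\perp w_2=\nabla\psi\cdot\nabla w_2$ and integrating by parts,
\begin{equation*}
\int_T(\bm{p}_2-\Pi_h^1\bm{p}_2)\cdot\nabla^\perp w_2=\sum_{k=1}^{3}\int_{e_k}\psi\,\dnk{w_2}-\Delta w_2\int_T\psi=\sum_{k=1}^{3}\int_{e_k}\psi\,\dnk{w_2}.
\end{equation*}

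Next I would identify the trace $\psi|_{e_k}$ from the $RT_1$ degrees of freedom. The identity $\nabla^\perp\psi\cdot\bm{n}_k=-\dtk\psi$ together with the Gauss-point conditions $\mathcal{N}_{e_k}^j(\bm{p}_2-\Pi_h^1\bm{p}_2)=0$ for $j=1,2$ forces $\dtk\psi|_{e_k}\in\mathcal{P}_2(e_k)$ to be proportional to the unique quadratic vanishing at the two Gauss abscissae, with leading coefficient fixed by $\dtk^2(\bm{p}_2\cdot\bm{n}_k)=\DD_{2,k}^{11}\bm{p}_2$. Integrating in the arc-length parameter determines $\psi|_{e_k}$ as a cubic whose coefficients are linear combinations of the constants $\DD_{i,k}^{jl}\bm{p}_2$, up to one edge constant fixed by continuity at the vertices and the global normalization. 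Since $\dtk w_2|_{e_k}$ is affine with constant slope $\dtnk{w_2}$ and $\dtk^2 w_2$ is constant, two successive tangential integrations by parts on $e_k$ convert $\int_{e_k}\psi\,\dnk{w_2}$ into a multiple of $\dtk^2 w_2$ plus vertex contributions; summing over $k$, the vertex contributions from adjacent edges cancel after invoking the interior DOFs $\int_T(\bm{p}_2-\Pi_h^1\bm{p}_2)=0$.

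The main obstacle is the explicit identification of the geometric coefficients $\mu^i_{jl,k}$. After expanding $\psi$ in the local $(\bm{t}_k,\bm{n}_k)$ frame and collecting terms, each second derivative $\DD_{i,k}^{jl}\bm{p}_2$ acquires a weight depending on $\ell_1,\ell_2,\ell_3$ and the circumscribed-circle diameter $d$, and one must verify these weights coincide with the stated formulas for $\mu^i_{jl,k}$. As both sides of the identity are multilinear in the purely quadratic parts of $\bm{p}_2$ and $w_2$, a useful sanity check is to verify the identity on the finite basis $\{x_1^2\bm{e}_j,x_1x_2\bm{e}_j,x_2^2\bm{e}_j\}\times\{x_1^2,x_1x_2,x_2^2\}$ by direct monomial integration on a generic triangle; however, the intrinsic $(\bm{t}_k,\bm{n}_k)$-form stated in the lemma is recovered most naturally via the edge-local derivation above.
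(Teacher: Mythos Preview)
Your opening moves match the paper's: write $\bm{p}_2-\Pi_h^1\bm{p}_2=\nabla^\perp\psi$ with $\psi\in\mathcal{P}_3(T)$, then integrate by parts so that the quantity becomes a boundary expression in $\psi$ minus $\int_T\psi\,\Delta w_2$. From this point, however, the two arguments diverge, and your outline has a genuine gap.

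The paper does not try to recover the final identity edge by edge from the trace $\psi|_{e_k}$. Instead it first pins down $\psi$ \emph{globally} on $T$ (Lemma~\ref{RT1err1}): since $\psi$ vanishes at the three vertices and the three edge midpoints (the Lobatto points, by Lemma~\ref{moment2nodal}), it lies in $\text{span}\{\psi_0,\psi_1,\psi_2,\psi_3\}$ with $\psi_0=\lambda_1\lambda_2\lambda_3$ and $\psi_k=\lambda_{k-1}\lambda_{k+1}(\lambda_{k-1}-\lambda_{k+1})$, and the four coefficients are extracted by applying $\bm{d}^\perp\cdot\partial_{\bm d}^2$ along $\bm{t}_k$ and $\bm{n}_k$. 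With this explicit bubble decomposition in hand, both the area term and the boundary term are computed using the hierarchical representation $w_2-I_hw_2=-\tfrac12\sum_k\ell_k^2\phi_k\,\dtk^2 w_2$ and the Laplacian identity~\eqref{Laplacian}, which are precisely the devices that convert everything into the basis $\{\dtk^2 w_2\}_{k=1}^3$.

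Your plan to reach $\dtk^2 w_2$ by ``two successive tangential integrations by parts on $e_k$'' of $\int_{e_k}\psi\,\dnk w_2$ does not work as stated. On $e_k$ the function $\dnk w_2$ is affine in arclength with slope $\dtnk w_2$, so tangential IBP produces $\dtnk w_2$ and vertex values of $\dnk w_2$, not $\dtk^2 w_2$; there is no second IBP that turns the mixed derivative into the pure tangential one on a single edge. The passage from $\{\dtnk w_2,\dnk^2 w_2\}$ to $\{\dtk^2 w_2\}_{k=1}^3$ is an algebraic change of basis among the three constant second derivatives of $w_2$ and requires cross-edge identities of exactly the type~\eqref{elem}--\eqref{Laplacian}; this is the crux of the computation and is absent from your sketch. (Incidentally, the sentence ``$\dtk w_2|_{e_k}$ is affine with constant slope $\dtnk w_2$'' is a slip: the slope of $\dtk w_2$ along $e_k$ is $\dtk^2 w_2$.)

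There is also a bookkeeping issue with your constants. Determining $\psi|_{e_k}$ from $\dtk\psi|_{e_k}$ leaves one additive constant per edge; continuity at vertices gives only two independent relations, and the remaining degree of freedom cannot be fixed by the normalization $\int_T\psi=0$ using only boundary data, since that integral depends on the interior $\psi_0$-component. The paper avoids this by normalizing $\psi$ to vanish at the vertices (so the edge constants are all zero) and recovering the $\psi_0$-coefficient from the $RT_1$ structure, not from the interior moments you allude to. If you want to salvage your route, the cleanest fix is to adopt the vertex normalization and then import the hierarchical identity~\eqref{hierarchy} to perform the basis change; but at that point you are essentially retracing the paper's proof.
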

Built upon Lemma \ref{RT1err2}, we derive the local error expansion for general $\bm{p}$.
\begin{theorem}\label{RT1err3}
For $w_2\in\mathcal{P}_2(T)$,
\begin{equation*}
\begin{aligned}
\int_T(\bm{p}-\Pi_h^1\bm{p})\cdot\nabla^\perp w_2&=\sum_{k=1}^{3}
\int_{e_{k}}\CB_k(\bm{p})\dtk^2{w_2}+O(h_T^3)|\bm{p}|_{3,T}\|\nabla^\perp w_2\|_{0,T}.
\end{aligned}
\end{equation*}
\end{theorem}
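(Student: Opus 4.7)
The plan is to reduce the general case to Lemma \ref{RT1err2} via local polynomial approximation, then estimate the two resulting remainders. First I would pick $\bm{p}_2\in\mathcal{P}_2(T)^2$ to be an averaged Taylor polynomial (or any Bramble--Hilbert approximant) of $\bm{p}$ on $T$, so that $|\bm{p}-\bm{p}_2|_{j,T}\lesssim h_T^{3-j}|\bm{p}|_{3,T}$ for $j=0,1,2,3$.

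Next I would split
\begin{equation*}
\int_T(\bm{p}-\Pi_h^1\bm{p})\cdot\nabla^\perp w_2=\int_T(\bm{p}_2-\Pi_h^1\bm{p}_2)\cdot\nabla^\perp w_2+\int_T\bigl[(\bm{p}-\bm{p}_2)-\Pi_h^1(\bm{p}-\bm{p}_2)\bigr]\cdot\nabla^\perp w_2,
\end{equation*}
apply Lemma \ref{RT1err2} to the first summand, and use linearity of $\CB_k$ to rewrite it as
\begin{equation*}
\sum_{k=1}^3\int_{e_k}\CB_k(\bm{p})\,\dtk^2 w_2-\sum_{k=1}^3\int_{e_k}\CB_k(\bm{p}-\bm{p}_2)\,\dtk^2 w_2.
\end{equation*}
This isolates a volume remainder $R_1$ and an edge remainder $R_2$, and the goal reduces to showing that each is $O(h_T^3)|\bm{p}|_{3,T}\|\nabla^\perp w_2\|_{0,T}$.

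For $R_1$, Cauchy--Schwarz and the local version of the standard $RT_1$ interpolation estimate \eqref{approxRTa} applied to the residual $\bm{p}-\bm{p}_2$ give
$\|(\bm{p}-\bm{p}_2)-\Pi_h^1(\bm{p}-\bm{p}_2)\|_{0,T}\lesssim h_T^2|\bm{p}-\bm{p}_2|_{2,T}\lesssim h_T^3|\bm{p}|_{3,T}$, which is the desired bound. For $R_2$, the scaling of the edges and the circumdiameter under quasi-uniformity forces every coefficient $\mu_{jl,k}^i$ to be $O(h_T^4)$, so $|\CB_k(\bm{r})|\lesssim h_T^4|D^2\bm{r}|$ pointwise on $e_k$. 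Combining this with the trace inequality \eqref{trace} to control $\|D^2(\bm{p}-\bm{p}_2)\|_{0,e_k}\lesssim h_T^{-1/2}|\bm{p}-\bm{p}_2|_{2,T}+h_T^{1/2}|\bm{p}-\bm{p}_2|_{3,T}\lesssim h_T^{1/2}|\bm{p}|_{3,T}$, and with the polynomial inverse estimate $\|\dtk^2 w_2\|_{0,e_k}\lesssim h_T^{-3/2}\|\nabla^\perp w_2\|_{0,T}$ (valid because $w_2\in\mathcal{P}_2(T)$), yields $|R_2|\lesssim h_T^3|\bm{p}|_{3,T}\|\nabla^\perp w_2\|_{0,T}$.

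The main obstacle I anticipate is the bookkeeping in $R_2$: confirming the uniform $O(h_T^4)$ size of all $\mu_{jl,k}^i$ under the minimum angle condition (the coefficient $\mu_{11,k}^2$ is delicate because of the $(\ell_1\ell_2\ell_3)^{-1}$ prefactor) and verifying that the scattered factors of $h_T^{\pm 1/2}$ coming from trace and inverse estimates cancel to give exactly $O(h_T^3)$. The remaining parts reduce to routine polynomial approximation and the $L^2$-stability of $\Pi_h^1$ built into \eqref{approxRTa}.
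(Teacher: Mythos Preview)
Your proposal is correct and follows essentially the same route as the paper: split off a quadratic approximant, apply Lemma~\ref{RT1err2}, and bound the volume remainder $R_1$ and edge remainder $R_2$ separately using interpolation estimates, the $O(h_T^4)$ scaling of the $\mu^i_{jl,k}$, and trace/inverse inequalities. The only cosmetic differences are that the paper uses the quadratic Lagrange interpolant $\bm{p}_I$ rather than an averaged Taylor polynomial, and for $R_1$ it invokes the $L^\infty$-stability \eqref{Linfty} of $\Pi_h^1$ together with $\|\bm{p}-\bm{p}_I\|_{0,\infty,T}\lesssim h_T^2|\bm{p}|_{3,T}$ instead of your $L^2$ interpolation bound for $\Pi_h^1$; either variant works.
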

\begin{proof}
Let $\bm{p}_I$ be the quadratic interpolant of $\bm{p}$. By Lemma \ref{RT1err2}, we have
\begin{equation}\label{RT1totallocal}
\begin{aligned}
&\int_T(\bm{p}-\Pi_h^1\bm{p})\cdot\nabla^\perp w_2=\int_T(\text{id}-\Pi_h^1)(\bm{p}-\bm{p}_I)\cdot\nabla^\perp w_2\\
&\qquad+\sum_{k=1}^{3}\int_{e_{k}}\CB_k{(\bm{p}_I-\bm{p})}\dtk^2{w_2}+\sum_{k=1}^{3}\int_{e_{k}}\CB_k({\bm{p}})\dtk^2{w_2}\\
&\quad:=I+II+III,\\
\end{aligned}
\end{equation}
{\color{blue}where id is the identity operator.} The inequalities \eqref{feinterp} and \eqref{Linfty} give the upper bound 
\begin{equation}\label{RT1bd12}
\begin{aligned}
|I|&\lesssim\|(\text{id}-\Pi_h^1)(\bm{p}-\bm{p}_I)\|_{0,T}\|\nabla^\perp w_2\|_{0,T}\\
&\lesssim h_T\|(\text{id}-\Pi_h^1)(\bm{p}-\bm{p}_I)\|_{0,\infty,T}\|\nabla^\perp w_2\|_{0,T}\\
&\lesssim h_T\|\bm{p}-\bm{p}_I\|_{0,\infty,T}\|\nabla^\perp w_2\|_{0,T}\\
&\lesssim h_T^3|\bm{p}|_{3,T}\|\nabla^\perp w_2\|_{0,T}.
\end{aligned}
\end{equation}
Using the trace inequality \eqref{trace}, inverse inequality, and $\mu^i_{jl,k}=O(h_T^4),$
\begin{equation}\label{RT1bd3}
\begin{aligned}
|II|&\lesssim\sum_{k=1}^3\|\CB_k{(\bm{p}_I-\bm{p})}\|_{0,e_k}\|\dtk^2{w_2}\|_{0,e_k}\\
&\lesssim \sum_{k=1}^3\big(h_T^{-\frac{1}{2}}\|\CB_k{(\bm{p}_I-\bm{p})}\|_{0,T}+h_T^{\frac{1}{2}}|\CB_k{(\bm{p}_I-\bm{p})}|_{1,T}\big)\\
&\qquad\times \big(h_T^{-\frac{1}{2}}\|D^2w_2\|_{0,T}+h_T^{\frac{1}{2}}|D^2w_2|_{1,T}\big)\\
&\lesssim \sum_{k=1}^3(h_T^{-\frac{1}{2}}|h_T^4(\bm{p}_I-\bm{p})|_{2,T}+h_T^{\frac{1}{2}}|h_T^4(\bm{p}_I-\bm{p})|_{3,T})\times(h^{-\frac{3}{2}}\|\nabla^\perp w_2\|_{0,T})\\
&\lesssim h_T^3|\bm{p}|_{3,T}\|\nabla^\perp w_2\|_{0,T}.
\end{aligned}
\end{equation}
Combining \eqref{RT1totallocal}--\eqref{RT1bd3} , we prove the theorem.
\qed\end{proof}
Our supercloseness estimates in this paper hold on mildly structured grids described as follows, see, e.g., \cite{LMW2000,BX2003a,XZ2003,NZ2004,HuangXu2008}.
\begin{definition}\label{approxpara1}
For $e\in\Eh^o$, let $T, T^{\prime}\in\Th$ be the two adjacent
elements sharing $e$. Define $e_1=e_1^\prime=e$. By going along $\partial T$ and $\partial T^{\prime}$ counterclockwise, we obtain other two pairs of corresponding edges $e_2, e_2^\prime$ and $e_3, e_3^\prime$. We say $\omega_e=T\cup T^{\prime}$ is an $O(h^{1+\alpha})$-approximate parallelogram provided $ |e_i|=|e_i^\prime|+O(h^{1+\alpha})$ for $i=1,2,3$.
\end{definition}
\begin{definition}\label{alphasigma}
{\color{blue}Assume $\mathcal{E}_{h}^{o}$ is the disjoint union of two subsets $\mathcal{E}_{h,1}^{o}$ and $\mathcal{E}_{h,2}^{o}$}.
We say the triangulation $\mathcal{T}_{h}$ satisfies the $(\alpha,\beta)$-condition provided for each $e\in\mathcal{E}_{h,1}^{o}$, $\omega_e$ an $O(h^{1+\alpha})$-approximate parallelogram, while $\sum_{e\in\mathcal{E}_{h,2}^{o}}|\omega_e|=O(h^{\beta})$. 
\end{definition}
Although the expression of $\CB_k$ is complicated, it suffices to keep the following in mind.
\begin{enumerate}
\item $\{\CB_k\}_{k=1}^3$ are second order differential operators of magnitude $h_T^4$:
$$\CB_k(\bm{q})=O(h_T^4)\sum_{i,j,l=1}^2\partial_{x_i}\partial_{x_j}q_l.$$
\item
For $e\in\Eh^o$, we have $\omega_e=T\cup T^\prime$. Let $\bm{t}_{e}$ denote the unit tangent and and $\bm{n}_{e}$ the unit normal to $e$ whose directions are induced by $T$. Let $\bar{{a}}=\frac{1}{|T|}\int_T{a}$ and $\bar{a}^\prime=\frac{1}{|T|}\int_{T^\prime}{a}.$ 
Let $\CB_e$ be the operator based on $T$ and $\CB_e^\prime$ based on $T^\prime$. If $\omega_e$ is an $O(h^{1+\alpha})$-approximate parallelogram, then on the edge $e$, we have the cancellation 
\begin{equation}\label{cancelRT1}
\bar{{a}}\CB_e(\bm{q})-\bar{{a}}^\prime\CB_e^\prime(\bm{q})=O(h_e^{4+\min(1,\alpha)})\sum_{i,j,m=1}^2\partial_{x_i}\partial_{x_j}q_m.
\end{equation}
\end{enumerate}
Indeed, $\omega_e$ is an approximate parallelogram implies that $\ell_k=\ell_k^\prime+O(h^{1+\alpha})$, $\bm{t}_k=\bm{t}_k^\prime+O(h^{\alpha})$, $\sin\theta_k=\sin\theta_k^\prime+O(h^\alpha)$, $d=d^\prime+O(h^{1+\alpha})$.
Combining these estimates with $\bar{{a}}=\bar{{a}}^\prime+O(h)$, \eqref{cancelRT1} follows from the telescoping type inequality $$\left|\prod_{i=1}^{n}a_{i}-\prod_{i=1}^{n}b_{i}\right|\leq \sum_{i=1}^{n}|a_{i}-b_{i}|\prod_{j\neq i}\max(a_{j},b_{j}).$$

\section{Supercloseness estimates}
In this section, first we prove a superconvergence estimate for variational error which is a foundation of supercloseness estimates. 
\begin{lemma}\label{RT1mainlemma}
Let $\mathcal{T}_{h}$ satisfy the $(\alpha,\beta)$-condition and $\bar{a}$ be the piecewise constant with $\bar{a}|_T=\frac{1}{|T|}\int_Ta$ for each $T\in\Th$.
For $w_h\in\mathcal{W}_{h}^2$, it holds that
\begin{equation*}
(\bar{a}(\bm{p}-\Pi_{h}^1\bm{p}),\nabla^\perp w_h)
\lesssim h^{2+\min(\frac{1}{2},\alpha,\frac{\beta}{2})}\big(|\bm{p}|_{2,\infty,\Omega}+|\bm{p}|_{3,\Omega}\big)\|\nabla^\perp w_h\|_{0,\Omega}.
\end{equation*}
\end{lemma}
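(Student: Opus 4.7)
The plan is to apply Theorem \ref{RT1err3} on each triangle $T\in\Th$, sum the resulting local expansions, and then regroup the edge integrals according to the edge's type (good interior, bad interior, or boundary). Concretely,
\begin{equation*}
(\bar a(\bm p-\Pi_h^1\bm p),\nabla^\perp w_h)=\sum_{T\in\Th}\bar a|_T\int_T(\bm p-\Pi_h^1\bm p)\cdot\nabla^\perp w_h,
\end{equation*}
and Theorem \ref{RT1err3} rewrites each local integral as $\sum_{k=1}^3\int_{e_k}\CB_k(\bm p)\,\dtk^2 w_h$ plus a remainder controlled by $h_T^3|\bm p|_{3,T}\|\nabla^\perp w_h\|_{0,T}$. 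Summing those remainders and applying Cauchy--Schwarz over $T$ yields a global remainder of order $h^3|\bm p|_{3,\Omega}\|\nabla^\perp w_h\|_{0,\Omega}$, which is comfortably absorbed into the desired bound.

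Next I would regroup the edge contributions. Since $w_h\in\Wh^2$ is $C^0$ and piecewise quadratic, $\dte^2 w_h$ is single-valued on each interior edge $e$, so the two contributions from the adjacent triangles $T,T'$ combine, after the orientation bookkeeping prescribed by Definition \ref{approxpara1}, into a single edge integral whose integrand is of the form $(\bar a\CB_e(\bm p)-\bar a^\prime\CB_e^\prime(\bm p))\,\dte^2 w_h$. This is precisely the expression for which the cancellation \eqref{cancelRT1} supplies the factor $h^{4+\min(1,\alpha)}$ on approximate parallelograms. Boundary edges carry no partner and must be handled with the naive pointwise bound $|\CB_e(\bm p)|=O(h^4)|\bm p|_{2,\infty,T}$.

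The three resulting sums are then estimated via standard trace and inverse inequalities. For $e\in\mathcal{E}_{h,1}^o$, inequality \eqref{trace} combined with the inverse inequality $\|D^2 w_h\|_{0,T}\lesssim h^{-1}\|\nabla w_h\|_{0,T}$ gives $\|\dte^2 w_h\|_{0,e}\lesssim h^{-3/2}\|\nabla w_h\|_{0,T}$; pairing with \eqref{cancelRT1} and summing by Cauchy--Schwarz (using that each triangle appears in $O(1)$ edge patches) produces the contribution $h^{2+\min(1,\alpha)}|\bm p|_{2,\infty,\Omega}\|\nabla w_h\|_{0,\Omega}$. For $e\in\mathcal{E}_{h,2}^o$, the same derivative bound is used without cancellation, and the cardinality estimate $\#\mathcal{E}_{h,2}^o\lesssim h^{\beta-2}$ extracted from $\sum|\omega_e|=O(h^\beta)$ produces the factor $h^{\beta/2}$. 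For boundary edges the cardinality $\#\Eh^\partial\lesssim h^{-1}$ yields the $h^{1/2}$ loss. Taking the worst of these three rates and the $h$ margin in the triangle remainder recovers the claimed exponent $\min(\tfrac12,\alpha,\tfrac{\beta}{2})$.

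The main obstacle will be the orientation bookkeeping: on a shared edge the counterclockwise labeling forces $\bm t_e$ and $\bm n_e$ to flip between $T$ and $T'$, and each $\mu^i_{jl,k}$ and $\DD_{i,k}^{jl}$ has its own parity under that flip (e.g.\ $\mu^1_{12,k}$ is antisymmetric in $\ell_{k-1}\leftrightarrow\ell_{k+1}$, while $\mu^1_{11,k}$ is symmetric). One must verify that, because $\dte^2 w_h$ is even under $\bm t_e\mapsto-\bm t_e$, the sum over the two adjacent triangles actually assembles the \emph{difference} $\bar a\CB_e(\bm p)-\bar a^\prime\CB_e^\prime(\bm p)$ appearing in \eqref{cancelRT1}, rather than a sum (which would offer no cancellation). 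Once this sign accounting is in place, the remainder of the argument reduces to the trace inequality \eqref{trace}, the inverse inequality, and the counting bounds from the $(\alpha,\beta)$-condition.
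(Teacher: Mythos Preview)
Your proposal is correct and follows essentially the same route as the paper: apply Theorem \ref{RT1err3} elementwise, regroup the edge integrals into the form $(\bar a\CB_e(\bm p)-\bar a'\CB_e'(\bm p))\dte^2 w_h$, invoke the cancellation \eqref{cancelRT1} on $\mathcal{E}_{h,1}^o$, and use the naive $O(h^4)$ bound together with trace/inverse inequalities on the remaining edges. The only cosmetic differences are that the paper merges $\mathcal{E}_{h,2}^o$ and $\mathcal{E}_h^\partial$ into a single sum and extracts the extra power of $h$ via the measure $|\widetilde{\Omega}|=O(h^{\min(1,\beta)})$ rather than via your separate cardinality counts, and that for the good edges the paper bounds $\|D^2\bm p\|_{0,e}$ by the $L^2$ trace inequality (hence the $\|\bm p\|_{3,\Omega}$ factor) rather than by $|\bm p|_{2,\infty,\Omega}$; both variants yield the stated exponent.
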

\begin{proof}
By Theorem \ref{RT1err3} and the Cauchy--Schwarz inequality, the left hand side is
\begin{equation}\label{RT1maintotal}
\begin{aligned}
&(\bar{{a}}(\bm{p}-\Pi_{h}^1\bm{p}),\nabla^\perp w_h)\\
&\quad=\sum_{T\in\Th}\sum_{k=1}^3\int_{e_k}\bar{a}\CB_k(\bm{p})\dtk^2{w_h}+\sum_{T\in\Th}O(h_T^3)|\bm{p}|_{3,T}\|\nabla^\perp w_h\|_{0,T}\\
&\quad=\big(\sum_{e\in\CE_{h,1}^o}+\sum_{e\in\CE_{h,2}^o\cup\Eh^\partial}\big)\int_e\big(\bar{a}\CB_e(\bm{p})-\bar{a}^\prime\CB_e^\prime(\bm{p})\big)\dte^2{w_h}\\
&\quad\qquad+O(h^3)|\bm{p}|_{3,\Omega}\|\nabla^\perp w_h\|_{0,\Omega}:=I+II+O(h^3)|\bm{p}|_{3,\Omega}\|\nabla^\perp w_h\|_{0,\Omega}.
\end{aligned}
\end{equation}
Here the notations in \eqref{cancelRT1} are adopted and $\CB_e^\prime(\bm{p})=0$ if $e\in\Eh^\partial.$ By the cancellation \eqref{cancelRT1}, the trace inequality \eqref{trace}, and the inverse inequality,
\begin{equation}\label{RT1sumI}
    \begin{aligned}
    |I|&\lesssim\sum_{e\in\CE_{h,1}^o}h^{4+\min(1,\alpha)}\|D^2\bm{p}\|_{0,e}\|D^2w_h\|_{0,e}\\
    &\lesssim\sum_{e\in\CE_{h,1}^o}h^{4+\min(1,\alpha)}\big(h^{-\frac{1}{2}}\|D^2\bm{p}\|_{0,T}+h^{\frac{1}{2}}\|D^3\bm{p}\|_{0,T}\big)\big(h^{-\frac{1}{2}}\|D^2w_h\|_{0,T}\big)\\
    &\lesssim\sum_{e\in\CE_{h,1}^o}h^{2+\min(1,\alpha)}\|\bm{p}\|_{3,T}\|\nabla^\perp w_h\|_{0,T}\\
    &\lesssim h^{2+\min(1,\alpha)}\|\bm{p}\|_{3,\Omega}\|\nabla^\perp w_h\|_{0,\Omega}.
    \end{aligned}
\end{equation}
For $e\in\mathcal{E}_{h,2}^o$, there is no cancellation. Let $\widetilde{\Omega}=\cup_{e\in\CE_{h,2}^o\cup\CE_h^\partial}\omega_e.$ Using $|\widetilde{\Omega}|=O(h^{\min(1,\beta)})$ and the inverse inequality, the sum over $\mathcal{E}_{h,2}^o$ is 
\begin{equation}\label{RT1sumII}
\begin{aligned}
|II|&\lesssim\sum_{e\in\CE_{h,2}^o\cup\CE_h^\partial}h^4|D^2\bm{p}|_{0,\infty,e}\int_e|\dtk^2w_h|\\
&\lesssim h^2|\bm{p}|_{2,\infty,\Omega}\sum_{e\in\CE_{h,2}^o\cup\CE_h^\partial}\int_{\omega_e}|\nabla^\perp w_h|\\
&\lesssim h^{2+\min(\frac{1}{2},\frac{\beta}{2})}|\bm{p}|_{2,\infty,\Omega}\|\nabla^\perp w_h\|_{0,\widetilde{\Omega}}.
\end{aligned}
\end{equation}
Combining \eqref{RT1maintotal}--\eqref{RT1sumII} we prove the theorem.
\qed\end{proof}
Subtracting \eqref{RTmix:dv} from \eqref{RTmix:v} gives the error equation
\begin{subequations}\label{RTerreqn}
\begin{align}
(\bm{a}(\bm{p}-\bm{p}^r_{h}),\bm{q}_{h})-(\bm{q}_{h},\bm{b}(u-u^r_{h}))
+(\divg\bm{q}_{h},u-u^r_{h})&=0,\quad\bm{q}_{h} \in\mathcal{Q}_{h}^r,\\
-(\divg(\bm{p}-\bm{p}^r_{h}),v_{h})+(c(u-u^r_{h}),v_{h})&=0,\quad v_{h}\in\mathcal{V}_{h}^r.
\end{align}
\end{subequations}
Douglas and Roberts \cite{DR1985} have shown the standard a priori error estimates:
\begin{equation}\label{apriori}
\begin{aligned}
&\|\bm{p}-\bm{p}^r_{h}\|_{0,\Omega}\lesssim h^{r+1}\|u\|_{r+2,\Omega},\\
&\|\divg(\bm{p}-\bm{p}^r_{h})\|_{0,\Omega}\lesssim h^{r+1}\|u\|_{r+3,\Omega},\\
&\|u-u_{h}^r\|_{0,\Omega}\lesssim h^{r+1}\|u\|_{r+1+\delta_{r0},\Omega},
\end{aligned}
\end{equation}
{\color{blue}where $\delta_{r0}=1$ if $r=0$ and $\delta_{r0}=0$ if $r\neq0$.} In addition,
\cite{DR1985} gives the well-known supercloseness result for the scalar unknown $u$
\begin{equation}\label{supercloseu}
    \|P^r_h u-u^r_h\|_{0,\Omega}\lesssim h^{r+2}\|u\|_{r+2+\delta_{r0},\Omega}.
\end{equation}
\eqref{supercloseu} holds on unstructured meshes and implies that $\|\divg(\Pi_{h}^r\bm{p}-\bm{p}^r_{h})\|_{0,\Omega}$ is supersmall. For convenience, let
$\bm{\xi}_h:=\Pi^r_h\bm{p}-\bm{p}^r_h$.
\begin{theorem}\label{superclosedivp}
For general shape regular $\Th$ and $r\geq0,$
\begin{equation*}
\|\divg(\Pi_{h}^r\bm{p}-\bm{p}_{h}^r)\|_{0,\Omega}\lesssim h^{r+2}\|u\|_{2+r+\delta_{r0},\Omega}.
\end{equation*}
\end{theorem}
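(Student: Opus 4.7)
The plan is to derive an error equation for $\divg\bm{\xi}_h$ tested against $\mathcal{V}_h^r$, then exploit the $L^2$-orthogonality built into $P_h^r$ to gain one extra power of $h$. First I would subtract the second equation of \eqref{RTmix:dv} from the continuous counterpart in \eqref{RTmix:v} to obtain $(\divg(\bm{p}-\bm{p}_h^r),v_h)=(c(u-u_h^r),v_h)$ for all $v_h\in\mathcal{V}_h^r$. Since $v_h\in\mathcal{V}_h^r$, the commuting property \eqref{commutative} gives $(\divg\bm{p},v_h)=(P_h^r\divg\bm{p},v_h)=(\divg\Pi_h^r\bm{p},v_h)$, so the identity becomes
\begin{equation*}
(\divg\bm{\xi}_h,v_h)=(c(u-u_h^r),v_h),\qquad\forall v_h\in\mathcal{V}_h^r.
\end{equation*}
Because $\bm{\xi}_h\in\mathcal{Q}_h^r$ implies $\divg\bm{\xi}_h\in\mathcal{V}_h^r$, I can eventually test with $v_h=\divg\bm{\xi}_h$; this reduces the theorem to proving $|(c(u-u_h^r),v_h)|\lesssim h^{r+2}\|u\|_{r+2+\delta_{r0},\Omega}\|v_h\|_{0,\Omega}$.

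Next I would split $u-u_h^r=(u-P_h^ru)+(P_h^ru-u_h^r)$. The second piece is immediate: Douglas--Roberts supercloseness \eqref{supercloseu} combined with $\|c\|_{0,\infty,\Omega}\lesssim 1$ gives $|(c(P_h^ru-u_h^r),v_h)|\lesssim h^{r+2}\|u\|_{r+2+\delta_{r0},\Omega}\|v_h\|_{0,\Omega}$. The first piece is the only nontrivial step and is the main obstacle. The key observation is that on each $T$, if $\bar c_T$ denotes the elementwise average of $c$, then $\bar c_T v_h|_T\in\mathcal{P}_r(T)$, so by the defining orthogonality of $P_h^r$,
\begin{equation*}
(c(u-P_h^ru),v_h)_T=((c-\bar c_T)(u-P_h^ru),v_h)_T.
\end{equation*}
Bounding $\|c-\bar c_T\|_{0,\infty,T}\lesssim h_T$, $\|u-P_h^ru\|_{0,T}\lesssim h_T^{r+1}|u|_{r+1,T}$, summing over $T$, and applying Cauchy--Schwarz then yields $|(c(u-P_h^ru),v_h)|\lesssim h^{r+2}|u|_{r+1,\Omega}\|v_h\|_{0,\Omega}$.

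Combining the two bounds with $v_h=\divg\bm{\xi}_h$ and cancelling $\|\divg\bm{\xi}_h\|_{0,\Omega}$ delivers the stated estimate on any shape-regular mesh. Without the mean-removal trick the projection term would only yield the suboptimal rate $h^{r+1}$ already available from \eqref{apriori}; once that trick is in place, the rest of the argument is bookkeeping with standard projection estimates and requires no assumption on mesh structure.
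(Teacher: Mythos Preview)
Your proof is correct and follows essentially the same route as the paper: derive the error identity via the commuting diagram, split $u-u_h^r$ through $P_h^ru$, handle one piece by the Douglas--Roberts supercloseness \eqref{supercloseu}, and gain the extra power of $h$ on $(c(u-P_h^ru),v_h)$ by exploiting the $L^2$-orthogonality of $P_h^r$. The only cosmetic difference is that the paper writes $(u-P_h^ru,cv_h)=(u-P_h^ru,cv_h-P_h^r(cv_h))$ and then uses an inverse inequality on $|cv_h|_{h,r+1,\Omega}$, whereas your elementwise mean-removal of $c$ achieves the same extra factor of $h$ a bit more directly.
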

\begin{proof}
Let 
$$v_{h}:=\frac{\divg\bm{\xi}_h}{\|\divg\bm{\xi}_h\|_{0,\Omega}}\in\mathcal{V}_h^r.$$
By \eqref{commutative} and \eqref{RTerreqn}, we have
\begin{equation*}
\begin{aligned}
&\|\divg\bm{\xi}_h\|_{0,\Omega}=(\divg\bm{\xi}_h,v_{h})=(P^r_{h}\divg\bm{p}-\divg\bm{p}^r_{h},v_{h})\\
&\quad=(\divg(\bm{p}-\bm{p}^r_{h}),v_{h})=(u-P^r_{h}u,cv_{h})+(P^r_{h}u-u^r_{h},cv_{h}).
\end{aligned}
\end{equation*}
It then follows from \eqref{approxRT}, \eqref{apriori}, \eqref{supercloseu}, and $\|v_{h}\|_{0,\Omega}=1$ that
\begin{equation*}
\begin{aligned}
\|\divg\bm{\xi}_h\|_{0,\Omega}
&=(u-P_h^ru,cv_{h}-P^r_{h}(cv_{h}))+O(h^{r+2})\|u\|_{2+r+\delta_{r0},\Omega}\\
&=O(h^{2r+2})\|u\|_{r+1,\Omega}|cv_{h}|_{h,r+1,\Omega}+O(h^{r+2})\|u\|_{2+r+\delta_{r0},\Omega}\\
&=O(h^{r+2})\|u\|_{2+r+\delta_{r0},\Omega}.
\end{aligned}
\end{equation*}
In the last step, we use $v_h|_T\in\mathcal{P}_r(T)$ and the inverse inequality.
\qed\end{proof}

Before proving the superconvergence estimate of $\|\Pi^r_{h}\bm{p}-\bm{p}^r_{h}\|_{0,\Omega}$, it is necessary to discuss the $L^2$ de Rham complex in $\mathbb{R}^2$:
\begin{equation*}
H^1(\Omega)\xrightarrow{\nabla^\perp}\CQ\xrightarrow{\divg}\CV\rightarrow0.
\end{equation*}
Here $\CV=L^2(\Omega)$ is equipped with the standard $(\cdot,\cdot)$ inner product. Since we are dealing with variable coefficients, $\CQ$ is equipped with the weighted $L^2$ inner product $(\cdot,\cdot)_a$ given by
$$(\bm{q}_1,\bm{q}_2)_a:=(a\bm{q}_1,\bm{q}_2),\quad\bm{q}_1,\bm{q}_2\in L^2(\Omega)^2.$$ 
The weighted $L^2$-norm is $\|\bm{q}\|_a=(a\bm{q},\bm{q})^\frac{1}{2}.$ Clearly, $\|\bm{q}\|_{0,\Omega}\approx\|\bm{q}\|_a$ for all $\bm{q}\in L^2(\Omega)^2.$ 
Similarly, we have the discrete subcomplex
\begin{equation}\label{DRcomplex}
\mathcal{W}_h^{r+1}\xrightarrow{\nabla^\perp}\CQ_h^r\xrightarrow{\divg}\CV_h^r\rightarrow0.
\end{equation}
Let $\oplus$ denote the direct sum w.r.t.~$(\cdot,\cdot)_a.$ Since $\Omega$ is simply connected, \eqref{DRcomplex} is exact and the discrete Helmholtz/Hodge decomposition (see, e.g., \cite{AFW2006,AFW2010,CHX2009,YL2019}) holds:
\begin{equation}\label{disHelmholtz} 
\mathcal{Q}_{h}^r=\nabla^\perp\mathcal{W}_{h}^{r+1}\oplus\grad_{h}\mathcal{V}_{h}^r,
\end{equation}
where $\grad_{h}: \mathcal{V}_{h}^r\to\mathcal{Q}_{h}^r$ is the adjoint of $-\divg: \CQ_h^r\rightarrow \CV_h^r$ w.r.t. the weighted inner product $(\cdot,\cdot)_a$, namely,
$(a\grad_{h}v_{h},\bm{q}_{h})=-(v_{h},\divg\bm{q}_{h})$ for all $\bm{q}_{h}\in\mathcal{Q}_{h}^r.$

The last ingredient for our supercloseness analysis is a discrete Poincar\'e inequality. 
\begin{lemma}\label{disPoincare}
\begin{equation*}
\|v_{h}\|_{0,\Omega}\lesssim\|\grad_{h}v_{h}\|_{a},\quad v_h\in\CV_h^r.
\end{equation*}
\end{lemma}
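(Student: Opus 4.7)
The plan is to prove the discrete Poincar\'e inequality by duality, testing $\grad_h v_h$ against a carefully constructed element of $\CQ_h^r$ whose divergence equals (up to a sign) $v_h$ itself. Concretely, the skeleton is to produce $\bm{q}_h \in \CQ_h^r$ with $\divg \bm{q}_h = -v_h$ and $\|\bm{q}_h\|_{0,\Omega} \lesssim \|v_h\|_{0,\Omega}$; then plugging this into the adjoint definition of $\grad_h$ gives
\begin{equation*}
\|v_h\|_{0,\Omega}^2 = -(v_h,\divg \bm{q}_h) = (a\grad_h v_h, \bm{q}_h) \leq \|\grad_h v_h\|_a \|\bm{q}_h\|_a \lesssim \|\grad_h v_h\|_a \|v_h\|_{0,\Omega},
\end{equation*}
and dividing by $\|v_h\|_{0,\Omega}$ yields the claim. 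Here I used $\|\cdot\|_a \approx \|\cdot\|_{0,\Omega}$, which follows from $0 < \Lambda \leq a_2$ and the smoothness of $a_2$.

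The construction of $\bm{q}_h$ proceeds in two steps. First, lift $v_h$ to the continuous level: solve the auxiliary Dirichlet problem $-\Delta \phi = v_h$ in $\Omega$ with $\phi = 0$ on $\partial \Omega$, and set $\bm{q} := \nabla \phi \in H^1(\Omega)^2$, so that $\divg \bm{q} = -v_h$. Elliptic regularity (on a convex or smooth $\Omega$, or via the Bogovskii construction on a general simply-connected Lipschitz $\Omega$) yields a right inverse of the divergence with $\|\bm{q}\|_{1,\Omega} \lesssim \|\phi\|_{2,\Omega} \lesssim \|v_h\|_{0,\Omega}$. Second, set $\bm{q}_h := \Pi_h^r \bm{q} \in \CQ_h^r$. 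The commuting diagram \eqref{commutative} plus $v_h \in \CV_h^r$ gives
\begin{equation*}
\divg \bm{q}_h = P_h^r(\divg \bm{q}) = -P_h^r v_h = -v_h,
\end{equation*}
while the standard $H^1$-stability estimate $\|\Pi_h^r \bm{q}\|_{0,T} \lesssim \|\bm{q}\|_{0,T} + h_T |\bm{q}|_{1,T}$ (valid because $\bm{q} \in H^1(T)^2$ has well-defined edge traces) summed over $T \in \Th$ yields $\|\bm{q}_h\|_{0,\Omega} \lesssim \|\bm{q}\|_{1,\Omega} \lesssim \|v_h\|_{0,\Omega}$.

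The main obstacle, and the only delicate point, is securing the right inverse of $\divg : H^1(\Omega)^2 \to L^2(\Omega)$ with the quantitative bound $\|\bm{q}\|_{1,\Omega} \lesssim \|v_h\|_{0,\Omega}$. No compatibility condition such as $\int_\Omega v_h = 0$ is required here, since $\CQ_h^r \subset H(\divg,\Omega)$ places no boundary restriction on $\bm{q}_h$ (so $\int_\Omega \divg \bm{q}_h$ can take any value). On convex or $C^{1,1}$ domains the Dirichlet Poisson route is immediate; for a general simply-connected Lipschitz $\Omega$ one invokes the classical Bogovskii lifting, which produces the required right inverse. Once this lifting is in hand, the remainder of the proof is a two-line duality argument as displayed above.
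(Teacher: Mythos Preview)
Your proof is correct and follows essentially the same approach as the paper: construct a stable discrete right inverse of $\divg:\CQ_h^r\to\CV_h^r$ and then run the two-line duality argument via the adjoint definition of $\grad_h$. The only difference is cosmetic: the paper simply cites \cite{RT1977} for the existence of $\bm{q}_h\in\CQ_h^r$ with $\divg\bm{q}_h=v_h$ and $\|\bm{q}_h\|_{0,\Omega}\lesssim\|v_h\|_{0,\Omega}$, whereas you spell out the standard construction (continuous lifting to $H^1(\Omega)^2$ followed by $\Pi_h^r$ and the commuting diagram \eqref{commutative}) that underlies that citation. One small caution: the Bogovski\u{\i} operator in its usual form produces $\bm{q}\in H_0^1(\Omega)^2$ and therefore requires mean-zero data; since here $\bm{q}$ carries no boundary condition, the cleanest route on a general Lipschitz $\Omega$ is to extend $v_h$ by zero to a ball $B\supset\Omega$, solve the Dirichlet Poisson problem there with full $H^2$ regularity, and restrict $\nabla\phi$ to $\Omega$.
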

\begin{proof}
$\divg: \CQ^r_h\rightarrow\CV^r_h$ is surjective and there exists $\bm{q}_{h}\in\mathcal{Q}^r_{h}$ and $\divg\bm{q}_h=v_h.$ In addition, $\bm{q}_h$ can be chosen (see \cite{RT1977}) such that 
$\|\bm{q}_h\|_a\approx\|\bm{q}_{h}\|_{0,\Omega}\lesssim\|v_{h}\|_{0,\Omega}.$
It then follows
\begin{equation*}
\begin{aligned}
\|v_{h}\|_{0,\Omega}^{2}&=-(a\grad_{h}v_{h},\bm{q}_{h})\lesssim\|\grad_{h}v_{h}\|_{a}\|v_{h}\|_{0,\Omega},
\end{aligned}
\end{equation*}
which completes the proof.
\qed\end{proof}
With the above preparations, we are able to prove supercloseness estimates for the $RT_1$ mixed methods.
\begin{theorem}\label{RTsuperclosep}
Assume that $\mathcal{T}_{h}$ satisfies the $(\alpha,\beta)$-condition. Then
\begin{equation*}
\|\Pi_{h}^1\bm{p}-\bm{p}_{h}^1\|\lesssim
h^{2+\min(\frac{1}{2},\alpha,\frac{\beta}{2})}\big(|\bm{p}|_{2,\infty,\Omega}+\|\bm{p}\|_{3,\Omega}\big).
\end{equation*}
\end{theorem}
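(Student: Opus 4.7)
Set $\bm{\xi}_h := \Pi_h^1\bm{p}-\bm{p}_h^1 \in \mathcal{Q}_h^1$. The plan is to apply the discrete Hodge decomposition \eqref{disHelmholtz} to write $\bm{\xi}_h = \nabla^\perp w_h + \grad_h v_h$ with $w_h\in\mathcal{W}_h^2$ and $v_h\in\mathcal{V}_h^1$, and then bound the two $(\cdot,\cdot)_a$-orthogonal components separately. Since $\|\cdot\|_a\approx\|\cdot\|_{0,\Omega}$, it suffices to show $\|\nabla^\perp w_h\|_a \lesssim h^{2+\min(1/2,\alpha,\beta/2)}$ (with the stated seminorms) and $\|\grad_h v_h\|_a\lesssim h^3\|u\|_{3,\Omega}$.

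For the gradient part I would use that $\divg\nabla^\perp=0$, so $\divg\bm{\xi}_h = \divg(\grad_h v_h)$. By the definition of $\grad_h$,
\begin{equation*}
\|\grad_h v_h\|_a^2 = -(v_h,\divg(\grad_h v_h)) = -(v_h,\divg\bm{\xi}_h),
\end{equation*}
and then the discrete Poincaré inequality (Lemma~\ref{disPoincare}) together with Theorem~\ref{superclosedivp} gives $\|\grad_h v_h\|_a \lesssim \|\divg\bm{\xi}_h\|_{0,\Omega} \lesssim h^3\|u\|_{3,\Omega}$, which is better than the target rate.

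For the divergence-free part, the $(\cdot,\cdot)_a$-orthogonality gives $\|\nabla^\perp w_h\|_a^2 = (a\bm{\xi}_h,\nabla^\perp w_h)$, and since $\divg(\nabla^\perp w_h)=0$, testing the error equation \eqref{RTerreqn} with $\bm{q}_h=\nabla^\perp w_h$ yields
\begin{equation*}
\|\nabla^\perp w_h\|_a^2 = -(a(\bm{p}-\Pi_h^1\bm{p}),\nabla^\perp w_h) + (\nabla^\perp w_h,\bm{b}(u-u_h^1)).
\end{equation*}
In the first term I replace $a$ by its piecewise average $\bar{a}$: the $\bar{a}$-part is controlled by Lemma~\ref{RT1mainlemma}, and the remainder $(a-\bar{a})(\bm{p}-\Pi_h^1\bm{p})$ is handled by $\|a-\bar{a}\|_{0,\infty,T}\lesssim h$ and \eqref{approxRTa} to give an $O(h^3)$ contribution. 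For the convection term I split $u-u_h^1 = (u-P_h^1u) + (P_h^1u-u_h^1)$. The second piece is bounded directly using the $u$-supercloseness \eqref{supercloseu}, yielding $O(h^3)$. The first piece requires a little more care: by $L^2$-orthogonality of $u-P_h^1u$ to $\mathcal{V}_h^1$,
\begin{equation*}
(\nabla^\perp w_h\cdot\bm{b},u-P_h^1u) = ((I-P_h^1)(\nabla^\perp w_h\cdot\bm{b}),u-P_h^1u),
\end{equation*}
and since $\nabla^\perp w_h|_T\in\mathcal{P}_1(T)^2$, the element-wise estimate on $(I-P_h^1)(\nabla^\perp w_h\cdot\bm{b})$ together with an inverse inequality gives an extra factor of $h$, and the standard $L^2$-projection bound on $u-P_h^1u$ supplies $h^2$, yielding another $O(h^3)$ contribution.

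Collecting all these estimates gives $\|\nabla^\perp w_h\|_a \lesssim h^{2+\min(1/2,\alpha,\beta/2)}(|\bm{p}|_{2,\infty,\Omega}+\|\bm{p}\|_{3,\Omega})$, using elliptic regularity $\|u\|_{3,\Omega}\lesssim\|\bm{p}\|_{3,\Omega}$ absorbed into the final constants; adding the bound on $\|\grad_h v_h\|_a$ and applying $\|\bm{\xi}_h\|_{0,\Omega}\approx\|\bm{\xi}_h\|_a$ proves the theorem. The main obstacle is the convection term: the orthogonal component $\nabla^\perp w_h$ has no favorable approximation property and is not orthogonal to $\mathcal{V}_h^1$ after multiplication by $\bm{b}$, so the combined use of inverse inequalities and the scalar supercloseness~\eqref{supercloseu} is essential to keep all non-geometric contributions at order $h^3$, letting the Lemma~\ref{RT1mainlemma} bound dictate the final rate.
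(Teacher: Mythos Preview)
Your proposal is correct and follows essentially the same route as the paper's proof: the discrete Hodge decomposition of $\bm{\xi}_h$, the bound on $\grad_h v_h$ via the discrete Poincar\'e inequality and Theorem~\ref{superclosedivp}, and the treatment of $\nabla^\perp w_h$ by splitting off the $\bar a$-part (handled by Lemma~\ref{RT1mainlemma}) and the convection term (handled by $L^2$-orthogonality of $u-P_h^1u$, the inverse inequality on $|\bm{b}\cdot\nabla^\perp w_h|_{h,2,\Omega}$, and \eqref{supercloseu}) are exactly what the paper does. The only cosmetic difference is that the paper normalizes $\bm{q}_h=\nabla^\perp w_h/\|\nabla^\perp w_h\|_a$ before estimating, whereas you work with $\|\nabla^\perp w_h\|_a^2$ and divide at the end.
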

\begin{proof}
For simplicity, the super-index $r=1$ is suppressed in the proof. Consider the discrete Helmholtz decomposition 
\begin{equation}\label{disHelmholtzxi}
\bm{\xi}_h:=\Pi_h\bm{p}-\bm{p}_h=\nabla^\perp w_{h}\oplus\grad_{h}v_{h},
\end{equation}
for some $\{v_{h},w_{h}\}\in\mathcal{V}^1_{h}\times\mathcal{W}^2_{h}$. 
Let $\bm{q}_h=\grad_h v_h/\|\grad_h v_h\|_{a}.$ By Lemma \ref{disPoincare} and Lemma \ref{superclosedivp},
\begin{equation}\label{bdgradv}
\begin{aligned}
&\|\grad_{h}v_{h}\|_{a}=(\grad_hv_h,\bm{q}_h)_a=-(v_h,\divg\bm{q}_h)\\
&=-\big(v_h,\frac{\divg\bm{\xi}_h}{\|\grad_h v_h\|_{a}}\big)\lesssim \|\divg\bm{\xi}_h\|_{0,\Omega}\lesssim h^{r+2}\|u\|_{r+2+\delta_{r0}}.
\end{aligned}
\end{equation}
It remains to bound $\nabla^\perp w_h$. Let $\bm{q}_h=\nabla^\perp w_h/\|\nabla^\perp w_h\|_{a}$. The orthogonality implies
\begin{equation}\label{gradrtotal}
\begin{aligned}
\|\nabla^\perp w_h\|_{a}&=-(a(\bm{p}-\Pi_{h}\bm{p}),\bm{q}_h)+(a(\bm{p}-\bm{p}_{h}),\bm{q}_h):=I+II.
\end{aligned}
\end{equation}
$I$ is split as
$$I=((\bar{a}-a)(\bm{p}-\Pi_{h}\bm{p}),\bm{q}_h)-(\bar{a}(\bm{p}-\Pi_{h}\bm{p}),\bm{q}_h).$$
By $\|\bar{a}-a\|_{0,\infty,\Omega}=O(h)$, \eqref{approxRT} and Lemma \ref{RT1mainlemma},
\begin{equation}\label{RTbdI}
\begin{aligned}
|I|\lesssim h^{3}|\bm{p}|_{2,\Omega}+h^{2+\min(\frac{1}{2},\alpha,\frac{\beta}{2})}\big(|\bm{p}|_{2,\infty,\Omega}+\|\bm{p}\|_{3,\Omega}\big).
\end{aligned}
\end{equation}
By $\divg\bm{q}_h=0$, $\|\bm{q}_h\|_{0,\Omega}\approx1$, \eqref{RTerreqn} and \eqref{supercloseu},
\begin{equation}\label{RTbdII1}
\begin{aligned}
II&=(\bm{q}_{h},\bm{b}(u-u_{h}))\\
&=(\bm{b}\cdot\bm{q}_{h},u-P_{h}u+P_{h}u-u_{h})\\
&=(\bm{b}\cdot\bm{q}_{h}-P_{h}(\bm{b}\cdot\bm{q}_{h}),u-P_{h}u)+O(h^3)
\|u\|_{3,\Omega}\\
&=O(h^{4})|\bm{b}\cdot\bm{q}_{h}|_{h,2,\Omega}|u|_{2,\Omega}
+O(h^{3})
\|u\|_{3,\Omega}.
\end{aligned}
\end{equation}
Since $\bm{q}_h|_T\in\CP_1(T)^2,$ the inverse estimate implies
$$|\bm{b}\cdot\bm{q}_h|_{2,T}\lesssim\|\bm{q}_h\|_{0,T}+\|D^1\bm{q}_h\|_{0,T}\lesssim h_T^{-1}\|\bm{q}_h\|_{0,T}.$$ 
\eqref{RTbdII1} then reduces to
\begin{equation}\label{RTbdII}
\begin{aligned}
II=O(h^3)
\|u\|_{3,\Omega}.
\end{aligned}
\end{equation}
Then the theorem follows from \eqref{bdgradv}--\eqref{RTbdI}, and \eqref{RTbdII}.
\qed\end{proof}

\section{Superconvergent recovery}\label{SR}
In this section, we introduce a new recovery operator $R_h^r:\mathcal{Q}_{h}^r\rightarrow\mathcal{W}_{h}^{r+1}\times\mathcal{W}_{h}^{r+1}$. For $\bm{q}_{h}\in\CQ_{h}^r,$ it suffices to specify nodal values of $R_h^r\bm{q}_{h}$. Here a node is the location of the degree of freedom of Lagrange elements, which can be a vertex of a triangle or an interior point of an edge/ triangle. For vertices $\bm{z}_1, \bm{z}_2, \bm{z}_3\in\Nh$, let $\overline{\bm{z}_1\bm{z}_2}$ denote the edge with endpoints $\bm{z}_1, \bm{z}_2$ and $\overline{\bm{z}_1\bm{z}_2\bm{z}_3}$ the triangle with vertices $\bm{z}_1, \bm{z}_2, \bm{z}_3$. $R_h^r$ is defined in three steps.

\emph{Step 1.} For each vertex $\bm{z}\in\mathcal{N}_h$, let $R_h^r\bm{q}_{h}(\bm{z}):=\bm{q}_z(\bm{z})$, where $\bm{q}_z\in\mathcal{P}_{r+1}(\omega_z)^{2}$ minimizes the quadratic functional
\begin{equation*}
\begin{aligned}
\mathcal{F}(\bm{q})&=\sum_{e\in\Eh(\omega_z)}\sum_{j=1}^{r+1}\big(\CN^j_{e}(\bm{q})-\mathcal{N}^j_{e}(\bm{q}_{h})\big)^2\\
&+\sum_{T\in\Th(\omega_z)}\sum_{l=1}^{r(r+1)/2}\sum_{m=1}^2\big(\mathcal{N}_T^{lm}(\bm{q})-\mathcal{N}_T^{lm}(\bm{q}_{h})\big)^2,
\end{aligned}
\end{equation*}
subject to $\bm{q}\in\mathcal{P}_{r+1}(\omega_z)^{2}$. 

\emph{Step 2.} For each node $\bm{z}$ in  the interior of an edge $e=\overline{\bm{z}_1\bm{z}_2}\in\Eh$, let 
$$R_h^r\bm{q}_{h}(\bm{z}):=(1-\alpha)\bm{q}_{z_1}(\bm{z})+\alpha\bm{q}_{z_2}(\bm{z}),\quad \alpha=|\bm{z}-\bm{z}_1|/|e|.$$ 

\emph{Step 3.} For each node $\bm{z}$ in the interior of the triangle $T=\overline{\bm{z}_1\bm{z}_2\bm{z}_3}\in\Th$, let $$R_h^r\bm{q}_{h}(\bm{z}):=\alpha_1\bm{q}_{z_1}(\bm{z})+\alpha_2\bm{q}_{z_2}(\bm{z})+\alpha_3\bm{q}_{z_3}(\bm{z}),$$ 
where $\alpha_1, \alpha_2, \alpha_3$ are barycentric coordinates of $\bm{z}$ w.r.t. $\bm{z}_1, \bm{z}_2,$ and $\bm{z}_3$.

In some cases, $\omega_z$ needs be enlarged to ensure that the above LS problem has a unique solution. Since $R_h^r$ depends only on the degrees of freedom of the $RT_r$ element, $R_h^r\bm{q}$ is well-defined for all $\bm{q}\in\mathcal{Q}$ and $R_h^r\Pi_h^r\bm{q}=R_h^r\bm{q}$. {\color{blue}Recall that
$\mathcal{N}_{e}^j(\bm{q})=\bm{q}(\bm{g}_j)\cdot\bm{n}_{e}$ if $\bm{q}\in\mathcal{P}_{r+1}(T)^2$ and $e\in\Eh(T)$.}

To clarify the recovery procedure, we give details to  two important cases: $RT_{0}$ and $RT_{1}$ elements.

\textbf{Example 1.} \emph{$RT_{0}$ elements on triangular meshes.} In this case, $R_h^0\bm{q}_{h}$ is a continuous piecewise linear function. At step $1$, let $\{e_{j}\}_{j=1}^J=\Eh(\omega_z).$ Let $\bm{m}_{j}=(m_{j1},m_{j2})^\intercal$ be the midpoint of $e_{j}$ and $\bm{n}_{j}=(n_{j1},n_{j2})^\intercal$ be a unit normal to $e_{j}$. Then $\bm{q}_{z}=(c_{1}+c_{2}x_1+c_{3}x_2, c_{4}+c_{5}x_1+c_{6}x_2)^\intercal\in\mathcal{P}_{1}(\omega_z)^2$ is the minimizer of 
\begin{equation*}
\begin{aligned}
&\mathcal{F}(\bm{q})=\sum_{j=1}^J\big(\bm{q}(\bm{m}_{j})\cdot\bm{n}_{j}-\bm{q}_{h}(\bm{m}_{j})\cdot\bm{n}_{j}\big)^{2},\\
&\text{subject to } \bm{q}\in\mathcal{P}_{1}(\omega_z)^{2}.
\end{aligned}
\end{equation*} 
Equivalently, $\bm{c}_z=(c_{1},\ldots,c_{6})^{\intercal}$ satisfies the normal equation $\bm{A}_z^\intercal\bm{A}_z\bm{c}_z=\bm{A}_z^\intercal\bm{d}_z$, where $\bm{d}_z=(\bm{q}_{h}(\bm{m}_{1})\cdot\bm{n}_{1},\ldots,\bm{q}_{h}(\bm{m}_J)\cdot\bm{n}_J)^\intercal$, $\bm{A}_z=(\bm{a}_{1}^\intercal,\ldots,\bm{a}_J^\intercal)^\intercal$ is an $N\times6$ matrix, $\bm{a}_{j}=(n_{j1}, m_{j1}n_{j1}, m_{j2}n_{j1}, n_{j2}, m_{j1}n_{j2}, m_{j2}n_{j2})$. Then $R_h\bm{q}_{h}(\bm{z})=\bm{q}_{z}(\bm{z})$ for $\bm{z}\in\CN_h.$

To avoid ill-conditioned $\bm{A}_z$ on graded meshes, we calculate $\bm{q}_z$ by scaling it properly. Let $h_z=|\omega_z|^\frac{1}{2}$ and $\hat{\bm{q}}_z(\hat{\bm{x}})=\bm{q}_z(\bm{z}+h_z\hat{\bm{x}})=(\hat{c}_{1}+\hat{c}_{2}\hat{x}_1+\hat{c}_{3}\hat{x}_2, \hat{c}_{4}+\hat{c}_{5}\hat{x}_1+\hat{c}_{6}\hat{x}_2)^\intercal$. Then
$\hat{\bm{c}}_z=(\hat{c}_{1},\ldots,\hat{c}_{6})^\intercal$ solves $\hat{\bm{A}}_z^\intercal\hat{\bm{A}}_z\hat{\bm{c}}_z=\hat{\bm{A}}_z^\intercal\bm{d}_z$, where $\hat{\bm{A}}_z=(\hat{\bm{a}}_{1}^\intercal,\ldots,\hat{\bm{a}}_J^\intercal)^\intercal$, $\hat{\bm{a}}_{j}=(n_{j1}, \hat{m}_{j1}n_{j1}, \hat{m}_{j2}n_{j1}, n_{j2}, \hat{m}_{j1}n_{j2}, \hat{m}_{j2}n_{j2}), \hat{\bm{m}}_{j}=(\bm{m}_{j}-\bm{z})/h_z=(\hat{m}_{j1},\hat{m}_{j2})$. Then $R_h^0\bm{q}_{h}(\bm{z})=(\hat{c}_{1},\hat{c}_{4})^\intercal$.

\textbf{Example 2.} \emph{$RT_{1}$ elements on triangular meshes.} In this case, $R_h^1\bm{q}_{h}$ is a continuous piecewise quadratic function. At step 1, let $\{e_{j}\}_{j=1}^J=\Eh(\omega_z)$ and $\{T_l\}_{l=1}^L=\Th(\omega_z)$. Let
\begin{equation*}
\bm{q}_z=\begin{pmatrix}c_{1}+c_{2}x_1+c_{3}x_2+c_{4}x_1^{2}+c_{5}x_1x_2+c_{6}x_2^{2}
\\c_{7}+c_{8}x_1+c_{9}x_2+c_{10}x_1^{2}+c_{11}x_1x_2+c_{12}x_2^{2}\end{pmatrix}\in\mathcal{P}_{2}(\omega_z)^{2}
\end{equation*}
minimize
\begin{equation*}
\begin{aligned}
\mathcal{F}(\bm{q})&=\sum_{j=1}^J\big(\bm{q}(\bm{x}_{j})\cdot\bm{n}_{j}
-\bm{q}_{h}(\bm{x}_{j})\cdot\bm{n}_{j}\big)^{2}+\big(\bm{q}(\bm{y}_{j})\cdot\bm{n}_{j}
-\bm{q}_{h}(\bm{y}_{j})\cdot\bm{n}_{j}\big)^{2}\\
&+\sum_{l=1}^L\sum_{m=1}^{2}\left(\frac{1}{|T_l|}\int_{T_l}q_m-\frac{1}{|T_l|}\int_{T_l}q_{h,m}\right)^{2},\quad\bm{q}\in\mathcal{P}_{2}(\omega_z)^{2},
\end{aligned}
\end{equation*} 
where $\bm{q}=(q_{1},q_{2})^{\intercal}, \bm{q}_{h}=(q_{h,1},q_{h,2})^{\intercal}$,
$\bm{x}_j=\frac{3+\sqrt{3}}{6}\bm{a}_{j}+\frac{3-\sqrt{3}}{6}\bm{b}_{j}$,
$\bm{y}_j=\frac{3-\sqrt{3}}{6}\bm{a}_{j}+\frac{3+\sqrt{3}}{6}\bm{b}_{j},$ and $e_{j}=\overline{\bm{a}_{j}\bm{b}_{j}}.$
Equivalently, $\bm{c}_z=(c_{1},\ldots,c_{12})^{\intercal}$ solves the normal equation $\bm{A}_z^\intercal\bm{A}_z\bm{c}_z=\bm{A}_z^\intercal\bm{d}_z$, where 
\begin{equation*}
\begin{aligned}
\bm{d}_z&=(\bm{q}_{h}(\bm{x}_{1})\cdot\bm{n}_{1},\bm{q}_{h}(\bm{y}_{1})\cdot\bm{n}_{1},\bm{q}_{h}(\bm{x}_{2})\cdot\bm{n}_{2},\bm{q}_{h}(\bm{y}_{2})\cdot\bm{n}_{2},\ldots,\\
&\quad\left.\bm{q}_{h}(\bm{y}_J)\cdot\bm{n}_J,\frac{1}{|T_1|}\int_{T_{1}}q_{h,1}, \frac{1}{|T_1|}\int_{T_{1}}q_{h,2}, \ldots, \frac{1}{|T_L|}\int_{T_L}q_{h,2}\right)^\intercal,
\end{aligned}
\end{equation*} 
and $\bm{A}_z=(\bm{a}_{1}^\intercal,\ldots,\bm{a}_{2J+2L}^\intercal)^\intercal$ is a $(2J+2L)\times12$ matrix, 
\begin{equation*}
\begin{aligned}
&\bm{a}_{2j-1}=(n_{j1}\bm{\xi}_{j}, n_{j2}\bm{\xi}_{j}),
\quad\bm{a}_{2j}=(n_{j1}\bm{\eta}_{j}, n_{j2}\bm{\eta}_{j}),\\
&\bm{\xi}_{j}=(1,x_{j1},x_{j2},x_{j1}^2,x_{j1}x_{j2},x_{j2}^2),\\
&\bm{\eta}_{j}=(1,y_{j1},y_{j2},y_{j1}^2,y_{j1}y_{j2},y_{j2}^2),
\quad1\leq j\leq J,\\
&\bm{a}_{2N+2l-1}=\frac{1}{|T_l|}\int_{T_l}(1,x_1,x_2,x_1^2,x_1x_2,x_2^{2},0,0,0,0,0,0),\\
&\bm{a}_{2N+2l}=\frac{1}{|T_l|}\int_{T_l}(0,0,0,0,0,0,1,x_1,x_2,x_1^2,x_1x_2,x_2^{2}),\quad1\leq l\leq L.
\end{aligned}
\end{equation*}
Then $R_h^1\bm{q}_{h}(\bm{z})=\bm{q}_z(\bm{z})$ for $\bm{z}\in\CN_h$. 
At step 2, for the midpoint $\bm{z}$ of the edge $e=\overline{\bm{z}_1\bm{z}_2}$, 
$R_h\bm{q}_{h}(\bm{z})=(\bm{q}_{z_1}(\bm{z})+\bm{q}_{z_2}(\bm{z}))/2$. 
one can again introduce the scaled polynomial $\hat{\bm{q}}_z(\hat{\bm{x}})=\bm{q}_z(\bm{z}+h_z\hat{\bm{x}})$ in practice.

Assume that the solution of each local LS problem at each vertex $\bm{z}$ is unique. By definition $R_h^r$ preserves $(r+1)$-degree polynomials, namely, $R_h^r\bm{q}=\bm{q}$ on $T$ for $\bm{q}\in\mathcal{P}_{r+1}(\omega_T)^{2}$, which leads to the super-approximation property
$\|\bm{q}-R_h^r\bm{q}\|_{0,\Omega}=O(h^{r+2}).$
However, it's not obvious that these local LS problems are uniquely solvable. The next obvious lemma gives several statements equivalent to uniqueness.
\begin{lemma}\label{TFAE}
The following statements are equivalent:
\begin{enumerate}
\item There exists a unique $\bm{q}_z$ at $\bm{z}.$ 
\item $A_z\bm{c}=\bm{0}$ implies $\bm{c}=\bm{0}$.
\item $\Pi_h^r\bm{q}_z=0$ on $\omega_z$ implies $\bm{q}_z\equiv0$. 
\end{enumerate}
\end{lemma}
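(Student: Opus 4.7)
The proof will be a short round-robin showing (1) $\Leftrightarrow$ (2) $\Leftrightarrow$ (3), and the whole lemma reduces to bookkeeping on the structure of the rows of $A_z$ together with unisolvence of the $RT_r$ element on each individual triangle $T \in \Th(\omega_z)$. I would not treat the three implications symmetrically; instead I would prove (1)$\Leftrightarrow$(2) by a standard fact about quadratic minimization, and then (2)$\Leftrightarrow$(3) by identifying the entries of $A_z \bm{c}$ with the $RT_r$ degrees of freedom of the polynomial encoded by $\bm{c}$.

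For (1)$\Leftrightarrow$(2), parametrize $\bm{q} \in \CP_{r+1}(\omega_z)^2$ by its coefficient vector $\bm{c}$, and observe that the quadratic functional $\CF(\bm{q})$ can be written as $\|A_z \bm{c} - \bm{d}_z\|^2$ for the matrix $A_z$ and data vector $\bm{d}_z$ introduced in Examples 1--2 (and analogously for general $r$). A strictly convex quadratic $\|A_z \bm{c} - \bm{d}_z\|^2$ has a unique minimizer if and only if $A_z$ has trivial kernel, which is exactly statement (2). No further calculation is needed here.

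For (2)$\Leftrightarrow$(3), the key observation is that, by construction, each row of $A_z$ applied to $\bm{c}$ returns either a value $\CN_e^j(\bm{q})$ for some $e \in \Eh(\omega_z)$ or a value $\CN_T^{lm}(\bm{q})$ for some $T \in \Th(\omega_z)$, where $\bm{q}$ is the polynomial in $\CP_{r+1}(\omega_z)^2$ corresponding to $\bm{c}$. Hence $A_z \bm{c} = \bm{0}$ is exactly the statement that all $RT_r$ degrees of freedom of $\bm{q}$ vanish on every edge and every triangle of $\omega_z$. By the unisolvence of the $RT_r$ degrees of freedom on each $T \in \Th(\omega_z)$, this is equivalent to $\Pi_h^r \bm{q}|_T = 0$ for every such $T$, i.e., $\Pi_h^r \bm{q} \equiv 0$ on $\omega_z$.

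The only mildly subtle point, and the one I would double-check carefully, is that the edge degrees of freedom used to define $\Pi_h^r$ are genuinely recovered from $A_z \bm{c}$. This is not entirely automatic because the defining functionals $\CN_e^j$ involve the basis $\{v_j\}$ dual to the Gauss points, whereas the rows of $A_z$ (as written in Examples 1--2) encode the point values $\bm{q}(\bm{g}_j)\cdot\bm{n}_e$. However, the author already noted after \eqref{quad} that for $\bm{q}\in\CP_{r+1}(e)^2$ one has $\CN_e^j(\bm{q}) = \bm{q}(\bm{g}_j)\cdot\bm{n}_e$, so vanishing of all point values at Gauss points on each $e\in\Eh(\omega_z)$ is equivalent to vanishing of all $\CN_e^j(\bm{q})$. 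With this identification in hand, (2)$\Leftrightarrow$(3) is immediate, which completes the proof.
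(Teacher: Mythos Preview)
Your argument is correct. The paper itself gives no proof of this lemma at all; it simply introduces it as ``the next obvious lemma'' and moves on, so your write-up supplies exactly the routine details the authors chose to omit, and there is nothing to compare against.
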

Hence it suffices to study the unisolvence of $\Pi_h^r$ on $\mathcal{P}_{r+1}(\omega_z)^2$. $\Pi_h^r$ is  moment-based interpolation while nodal interpolation is often easier to analyze. The next lemma reduces Statement 3 in Lemma \ref{TFAE} to the case of Lagrange interpolation.

\begin{lemma}\label{moment2nodal}
Assume $\Pi_h^r\bm{q}_z=0$ on $\omega_z$. Then $\bm{q}_z=\nabla^\perp w$ for some
$w\in\mathcal{P}_{r+2}(\omega_z).$ In addition, for $e\in\Eh(\omega_z)$,
$w(\bm{l})=0$ at any Lobatto quadrature point $\bm{l}$ on $e$.
\end{lemma}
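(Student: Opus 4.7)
The plan is to convert the hypothesis into a statement about a polynomial stream function on $\omega_z$ and then decode the edge moments via a classical Legendre--Lobatto identity. First I would show $\divg\bm{q}_z\equiv0$ on $\omega_z$. Since $\bm{q}_z\in\mathcal{P}_{r+1}(\omega_z)^2$ as a single polynomial on the patch, $\divg\bm{q}_z\in\mathcal{P}_r(\omega_z)\subset\mathcal{P}_r(T)$ for every $T\in\Th(\omega_z)$, so $P_h^r(\divg\bm{q}_z)|_T=\divg\bm{q}_z|_T$. Combining this with $\divg(\Pi_h^r\bm{q}_z)=P_h^r(\divg\bm{q}_z)$ from \eqref{commutative} and $\Pi_h^r\bm{q}_z=0$ gives $\divg\bm{q}_z=0$ pointwise on $\omega_z$. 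Because $\omega_z$ is star-shaped about $\bm{z}$ and $\bm{q}_z$ is a global polynomial, the polynomial Poincar\'e lemma yields $\bm{q}_z=\nabla^\perp w$ with $w\in\mathcal{P}_{r+2}(\omega_z)$, which I normalize by imposing $w(\bm{z})=0$.

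Next I would use the edge degrees of freedom to identify $\partial_{\bm{t}_e}w$ on each $e\in\Eh(\omega_z)$. By the identity $\mathcal{N}_e^j(\bm{q})=\bm{q}(\bm{g}_j)\cdot\bm{n}_e$ valid for $\bm{q}\in\mathcal{P}_{r+1}(e)^2$ and the hypothesis, $\bm{q}_z\cdot\bm{n}_e$ vanishes at all $r+1$ Gauss--Legendre nodes on $e$. Since $\bm{q}_z\cdot\bm{n}_e=-\partial_{\bm{t}_e}w|_e$ (using $\bm{n}_e^\perp=\bm{t}_e$) and both sides are polynomials of degree at most $r+1$ on $e$, this forces $\partial_{\bm{t}_e}w|_e$ to be a scalar multiple of the Legendre polynomial $P_{r+1}$ affinely mapped onto $e$.

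Finally I would read off the vanishing at Lobatto nodes. Orthogonality of $P_{r+1}$ to constants gives $\int_e\partial_{\bm{t}_e}w\,dt=0$, so $w$ takes the same value at both endpoints of every edge; propagating from $w(\bm{z})=0$ along the edges emanating from $\bm{z}$ reaches every vertex of $\omega_z$ and forces $w$ to vanish at every node of $\Nh(\omega_z)$. On each $e$ parameterized affinely as $[-1,1]$, integrating $\partial_{\bm{t}_e}w$ and invoking the classical identity
\begin{equation*}
(r+1)(r+2)\int_{-1}^{x}P_{r+1}(y)\,dy=(x^2-1)P_{r+1}'(x),
\end{equation*}
an immediate consequence of the Legendre ODE $\tfrac{d}{dx}[(1-x^2)P_{r+1}'(x)]=-(r+1)(r+2)P_{r+1}(x)$, shows that $w|_e$ is proportional to the Lobatto polynomial $(x^2-1)P_{r+1}'(x)$, whose roots are precisely the $r+2$ Gauss--Lobatto nodes on $e$. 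The main subtlety is invoking the Legendre-to-Lobatto identity with the correct normalization and handling edge orientations consistently as one walks around the patch; both steps are essentially bookkeeping once the stream function $w$ and the Legendre characterization of $\partial_{\bm{t}_e}w$ are in hand.
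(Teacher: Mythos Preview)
Your argument is correct and reaches the same conclusion, but it follows a different path from the paper. The paper never identifies $\partial_{\bm{t}_e}w$ with a Legendre polynomial; instead it uses the moment form of the edge degrees of freedom together with integration by parts to obtain $\int_e w\,\tilde v=0$ for all $\tilde v\in\mathcal{P}_{r-1}(e)$, and then picks $\tilde v$ to be the degree-$(r-1)$ polynomial taking the value $\mu_j^{-1}$ at one interior Lobatto node and $0$ at the others, so that exactness of the $(r+2)$-point Lobatto rule on $\mathcal{P}_{2r+1}(e)$ forces $w(\bm{l}_j)=\int_e w\,\tilde v=0$. Your route---Gauss-node vanishing $\Rightarrow \partial_{\bm{t}_e}w=c\,P_{r+1}$ $\Rightarrow$ integrate via the Legendre ODE to get $w|_e\propto (x^2-1)P_{r+1}'(x)$---is more explicit: it actually identifies $w|_e$ as the Lobatto polynomial up to scale, not just its zero set. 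The paper's route, by contrast, avoids the Legendre differential equation entirely and relies only on quadrature exactness, which may feel closer to the finite-element toolbox. Both arguments share the same opening (commuting diagram $\Rightarrow\divg\bm q_z=0\Rightarrow$ stream function, then constancy of $w$ at vertices from $\int_e\partial_{\bm t_e}w=0$), and the only real divergence is in how the interior Lobatto zeros are extracted.
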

\begin{proof}
$\Pi_h^r\bm{q}_z=0$ and \eqref{commutative} imply
\begin{equation*}
\divg\bm{q}_z=\divg(\bm{q}_z-\Pi_h^r\bm{q}_z)
=\divg\bm{q}_z-P_h^r\divg\bm{q}_z=0.
\end{equation*}
Hence $\bm{q}_z=\nabla^\perp w$ for some $w\in\mathcal{P}_{r+2}(\omega_z)$. Given $e=\overline{\bm{a}\bm{b}}\in\Eh(\omega_z)$, 
\begin{equation*}
w(\bm{b}) - w(\bm{a})=\int_{e}\dte{w}=\int_{e}\bm{q}_z\cdot\bm{n}_{e}=\int_{e}\Pi_h^r\bm{q}_z\cdot\bm{n}_{e}=0.
\end{equation*}
Hence $w(\bm{z})\equiv c$ for all vertices $\bm{z}$ in $\omega_z$. By subtracting $c$ from $w$, we can assume that $w$ vanishes at all vertices. For $v\in\mathcal{P}_r(e)$, 
$$\int_{e}w\dte{v}=-\int_{e}v\dte{w}=-\int_{e}\bm{q}_z\cdot\bm{n}_{e}v=-\int_{e}\Pi_h^r\bm{q}_z\cdot\bm{n}_{e}v=0,\quad $$ and thus 
\begin{equation}\label{orthoe}
\int_{e}w\tilde{v}=0\quad \text{for all}\ \tilde{v}\in\mathcal{P}_{r-1}(e).
\end{equation}
Note that on $e=\overline{\bm{a}\bm{b}},$ the Lobatto quadrature
$\int_{e}f=\sum_{j=1}^{r+2}\mu_{j}f(\bm{l}_j)$
is exact for $f\in\mathcal{P}_{2r+1}(e)$, where $\bm{l}_j=\bm{a}+(\bm{b}-\bm{a})\hat{l}_{j},$ $\{\hat{l}_{j}\}_{j=1}^{r+2}$ are zeros of the polynomial $\frac{d^r}{ds^r}\left(s^{r+1}(1-s)^{r+1}\right)$ and $\{\mu_{j}\}_{j=1}^{r+2}$ are corresponding weights. Let $\tilde{v}$ be the polynomial which is $\mu_{j}^{-1}$ at $\bm{l}_{j}$ and $0$ at rest of the $(r-1)$ interior quadrature points $\{\bm{l}_{i}\}_{i=2, i\neq j}^{r+1}$ in \eqref{orthoe}. Then 
$w(\bm{l}_{j})=\int_{e}w\tilde{v}=0.$
The proof is complete.
\qed\end{proof}

\begin{figure}[tbhp]
\centering
\includegraphics[width=13.0cm,height=4.5cm]{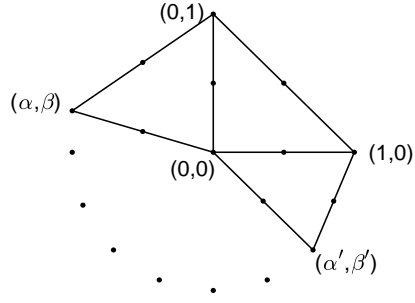}
\caption{A local patch containing the reference triangle.}
\label{patch}
\end{figure}
The next theorem gives practical criteria of checking the well-posedness of $R_h^0$ and $R_h^1$.
\begin{theorem}\label{RTuniqueness}
Let $\bm{z}$ be a vertex in $\Th$. If $\#\mathcal{T}(\omega_z)\geq5$ and the sum of each pair of adjacent angles in $\omega_z$ is $\leq\pi$, then there exists a unique $\bm{q}_z$ at $\bm{z}$ for $R_h^0$.
If $\#\mathcal{T}(\omega_z)\geq4$, then there exists a unique $\bm{q}_z$ at $\bm{z}$ for $R_h^1$.
\end{theorem}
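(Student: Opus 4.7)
The plan is to combine Lemma~\ref{TFAE} and Lemma~\ref{moment2nodal} to reduce the uniqueness of each local LS problem to a polynomial vanishing question. By Lemma~\ref{TFAE}, uniqueness at $\bm{z}$ is equivalent to showing that $\Pi_h^r\bm{q}_z\equiv 0$ on $\omega_z$ forces $\bm{q}_z\equiv 0$ for $\bm{q}_z\in\mathcal{P}_{r+1}(\omega_z)^2$. By Lemma~\ref{moment2nodal} we may write $\bm{q}_z=\nabla^\perp w$ with $w\in\mathcal{P}_{r+2}(\omega_z)$, and $w$ vanishes at every $(r+2)$-point Lobatto node on every $e\in\Eh(\omega_z)$. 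Since $\nabla^\perp w$ is insensitive to constants, it suffices to force $w\equiv 0$ under the extra normalization $w(\bm{z})=0$.

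For $R_h^0$, the Lobatto nodes are just edge endpoints, so $w\in\mathcal{P}_2(\omega_z)$ vanishes at every vertex of $\omega_z$. Placing $\bm{z}$ at the origin, write $w=L+Q$ with $L$ linear and $Q$ a homogeneous quadratic, leaving five unknown coefficients. For each neighboring vertex $\bm{z}_j=\ell_j\bm{u}_j$, the condition $w(\bm{z}_j)=0$ becomes
\begin{equation*}
L(\bm{u}_j)+\ell_j\,Q(\bm{u}_j)=0,\qquad 1\le j\le J,
\end{equation*}
a homogeneous linear system of size $J\times 5$ in the coefficients of $L$ and $Q$. The hypothesis $J=\#\Th(\omega_z)\ge 5$ makes this system square or overdetermined. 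I would then use the Naga--Zhang style argument from \cite{NZ2004}: the condition that every pair of adjacent angles at $\bm{z}$ sums to at most $\pi$ precludes three of the directions $\bm{u}_j$ from being collinear with $\bm{z}$, which is exactly what rules out degeneracy of the above coefficient matrix and forces $L=Q=0$.

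For $R_h^1$, $w\in\mathcal{P}_3(\omega_z)$ has ten coefficients and vanishes at every vertex and every edge midpoint of $\omega_z$. With $\bm{z}$ at the origin and $w(\bm{z})=0$, decompose $w=L+Q+K$ into homogeneous pieces of degree $1,2,3$, leaving nine unknowns. The simultaneous vanishing at the endpoint $\bm{z}_j=\ell_j\bm{u}_j$ and at the midpoint of $\overline{\bm{z}\bm{z}_j}$ gives two linear equations in $L(\bm{u}_j), Q(\bm{u}_j), K(\bm{u}_j)$ whose solution is
\begin{equation*}
\ell_j\,Q(\bm{u}_j)=-3\,L(\bm{u}_j),\qquad \ell_j^2\,K(\bm{u}_j)=2\,L(\bm{u}_j).
\end{equation*}
With $J\ge 4$ distinct rays $\bm{u}_j$, the first relation applied to four directions forces the two-dimensional form $L$ to vanish at four directions, hence $L\equiv 0$, and the relations then force $Q(\bm{u}_j)=K(\bm{u}_j)=0$ along each $\bm{u}_j$. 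The midpoint conditions on the outer edges $\overline{\bm{z}_j\bm{z}_{j+1}}$ supply the remaining constraints needed to pin down the five free coefficients of $Q$ and $K$.

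The main obstacle is proving full rank of the vanishing conditions with only the stated mild geometric hypothesis. For $R_h^0$ this is precisely the Naga--Zhang conic-based analysis, which I would have to re-execute under the adjacent-angle condition; for $R_h^1$ the counting is generous ($3J+1\ge 13$ conditions against ten unknowns) but the delicate step is verifying that the boundary-edge midpoint equations really do eliminate all remaining freedom in $Q$ and $K$, without any symmetry assumption on the patch. A secondary but routine point is to carry both arguments through when $\bm{z}$ lies on $\partial\Omega$, where one extra edge appears in $\omega_z$ and the angle sum at $\bm{z}$ is already bounded by $\pi$, so the counting and the angular hypothesis continue to be satisfied.
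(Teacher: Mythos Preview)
For $R_h^0$ your reduction via Lemmas~\ref{TFAE} and~\ref{moment2nodal} to a quadratic $w$ vanishing at every vertex of $\omega_z$, followed by an appeal to the Naga--Zhang conic analysis under the adjacent-angle hypothesis, is exactly what the paper does.

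For $R_h^1$ there is a genuine gap. Your assertion that ``the first relation applied to four directions forces the two-dimensional form $L$ to vanish at four directions'' is not correct: the relation $\ell_j Q(\bm{u}_j)=-3L(\bm{u}_j)$ only couples $L$ and $Q$; it does not say $L(\bm{u}_j)=0$. Nothing in the central-edge data alone kills $L$ (two central edges give $2J\le 8$ homogeneous conditions when $J=4$, against nine unknowns), so the subsequent claim that $Q(\bm{u}_j)=K(\bm{u}_j)=0$ along each ray collapses. You implicitly concede the real difficulty at the end---one must show the full $3J\times 9$ system has rank $9$ for \emph{every} admissible patch geometry---but the argument as written does not get there.

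The paper's route is structurally different and sidesteps any rank computation. It treats the cubic $w$ as a plane algebraic curve and splits on reducibility. If $w$ factors, its zero locus is either three lines or a line plus a conic; once $\#\Th(\omega_z)\ge4$, three lines cannot pass through all vertices and midpoints, and a conic branch through two vertices $\bm{a},\bm{b}$ cannot also contain $(\bm{a}+\bm{b})/2$. If $w$ is irreducible, one places a reference triangle with vertices $(0,0),(1,0),(0,1)$ in $\omega_z$, normalizes a leading coefficient to $1$, and solves the eight nodal equations coming from that triangle together with one adjacent triangle on the vertex $(\alpha,\beta)$; the solution forces $\alpha/\beta$ to equal a fixed ratio of coefficients of $w$ (B\'ezout's theorem rules out the degenerate denominators). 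Repeating with the other adjacent vertex $(\alpha',\beta')$ gives $\alpha'/\beta'$ equal to the same ratio, so $(\alpha,\beta)$ and $(\alpha',\beta')$ would lie on a common ray through the origin, contradicting $\#\Th(\omega_z)\ge4$. This algebraic-geometric argument is what makes the $R_h^1$ case work with no angle condition whatsoever, a feature your linear-algebra outline does not explain.
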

\begin{proof}
Assume $\Pi_h^r\bm{q}_z=0$ on $\omega_z$.  By Lemma \ref{moment2nodal}, $\bm{q}_z=\nabla^\perp w$ for some $w\in\mathcal{P}_{r+2}(\omega_z)$.
If $r=0$, then $w\in\mathcal{P}_2(\omega_z)$ vanishes at all vertices in $\omega_z$ and thus $w=0$ by Theorem 2.3 in \cite{NZ2004}. Hence $\bm{q}_z=\bm{0}.$ 

If $r=1$, $w\in\CP_3(\omega_z)$ vanishes at all vertices and midpoints of edges in $\omega_z$. Without loss of generality, we can assume that $\bm{z}=(0,0)$ and the reference triangle $\hat{T}$ spanned by $(0,0), (0,1), (1,0)$ is in $\Th(\omega_z)$. 

If $w$ is reducible, then the zero set $w^{-1}(0)$ is the union of three straight lines(counting multiplicity) or the union of a straight line and a conic. Clearly three lines cannot pass all vertices and midpoints in $\omega_z$ provided $\#\Th(\omega_z)\geq4$. If $w^{-1}(0)$ contains a conic branch $C$, then $C$ must contain at least two vertices $\bm{a}, \bm{b}$ in $\omega_z$ because $\#\Th(\omega_z)\geq4$. However, $C$ cannot pass through $(\bm{a}+\bm{b})/2$ by elementary geometry.

Hence reducible $w$ cannot vanish at all nodes in $\omega_z$ and we can assume 
\begin{equation*}
w=c_{1}x_1^{3}+c_{2}x_1^{2}x_2+c_{3}x_1x_2^{2}+c_{4}x_2^{3}+c_{5}x_2^{2}+c_{6}x_1x_2+c_{7}x_2^{2}+c_{8}x_1+c_{9}x_2
\end{equation*}
is irreducible. Furthermore, we can assume one of the coefficients of highest order terms is $1$, say $c_{1}=1$(similar argument for $c_{2}, c_{3}$ or $c_{4}=1$). Let $(\alpha,\beta)$ be the vertex outside $\hat{T}$ next to $(0,1)$, see Fig.~\ref{patch}. Solving the linear system of equations
\begin{equation*}
\begin{aligned}
&w(1,0)=w(0,1)=w(1/2,0)=w(0,1/2)=w(1/2,1/2)\\
&=w(\alpha,\beta)=w\left(\frac{\alpha}{2},\frac{\beta+1}{2}\right)=w\left(\frac{\alpha}{2},\frac{\beta}{2}\right)=0,
\end{aligned}
\end{equation*}
we have
\begin{equation}\label{c23}
c_{1}=\frac{3-3\alpha}{1+\beta},\quad c_{2}=\frac{3\alpha(\alpha-1)}{\beta(1+\beta)}.
\end{equation}
Note that $\beta\neq0, \beta\neq-1$ in \eqref{c23}, otherwise the irreducible cubic curve $w^{-1}(0)$ intersects with a line at five distinct points, which is impossible by B\'ezout's theorem (see \cite{Shafa}). Also $\alpha\neq1$ otherwise it violates the topology of the patch $\omega_z$. Hence $\alpha/\beta=-c_{2}/c_{1}$. Let $(\alpha^{\prime}, \beta^{\prime})$ be the vertex outside $\hat{T}$ next to $(1,0)$. Similarly we have $\alpha^{\prime}/\beta^{\prime}=-c_{2}/c_{1}$. Then it forces $(\alpha,\beta)=(\alpha^{\prime},\beta^{\prime}),$ which contradicts $\#\Th(\omega_z)\geq4$. Hence $w\equiv0$ and $\bm{q}_z\equiv0.$

Therefore by Lemma \ref{TFAE}, there exists a unique $\bm{q}_z$ for $r=0, 1$.
\qed\end{proof}
We say a vertex $\bm{z}$ is good if the condition in Theorem \ref{RTuniqueness} holds at $\bm{z}$, otherwise it is a bad vertex. In practice, $\Th$ typically has a few bad vertices, e.g., boundary vertices. There are several ways of dealing with a bad vertex $\bm{z}$. If $\bm{z}$ is directly connected to a good vertex $\bm{z}^\prime$, one can define $\omega_{z}:=\omega_{z^\prime}$ and thus $\bm{A}_z$ is of full column rank. 
A more convenient way is to \emph{empirically} add some extra elements to the patch $\omega_z$ in practice, e.g., enlarge $\omega_z$ by one layer. Alternatively, one can solve a rank-deficient local least squares problem, which might reduce the rate of superconvergence of $R^r_h.$

In the rest of this paper, we assume that
$$\text{At each vertex $\bm{z}$, there exists a unique $\bm{q}_z$}.$$
Using the uniqueness of the LS solution, we obtain the boundedness of $R_h^r$.
\begin{theorem}\label{RTboundedness}
For $\bm{q}_h\in\CQ_h^r$ and $T\in\Th$, 
$$\|R_h^r\bm{q}\|_{0,T}\lesssim\|\bm{q}\|_{0,\omega_T},\quad r=0,1.$$
\end{theorem}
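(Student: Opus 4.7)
The plan is to reduce the $L^2$-norm of $R_h^r\bm{q}_h$ on $T$ to pointwise values of the local fits $\bm{q}_z$ at the vertices of $T$, and then to bound each $\|\bm{q}_z\|_{0,\infty,\omega_z}$ by $h_z^{-1}\|\bm{q}_h\|_{0,\omega_z}$ through a scaling argument that exploits the assumed unique solvability of the local least squares problem.

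First, $R_h^r\bm{q}_h$ restricted to $T$ lies in $\CP_{r+1}(T)^2$ and is determined by its values at the Lagrange nodes of $\CW_h^{r+1}$ contained in $\overline{T}$. Equivalence of norms on the finite-dimensional polynomial space, together with the standard $L^2$ scaling, yields
\begin{equation*}
\|R_h^r\bm{q}_h\|_{0,T}\lesssim h_T\max_{\bm{z}^\prime}|R_h^r\bm{q}_h(\bm{z}^\prime)|,
\end{equation*}
where the maximum runs over the Lagrange nodes $\bm{z}^\prime$ of $\CW_h^{r+1}$ in $\overline{T}$. By steps 2 and 3 of the definition of $R_h^r$, each such value is a convex combination of $\{\bm{q}_{\bm{z}_k}(\bm{z}^\prime)\}_{k=1}^3$, where $\{\bm{z}_k\}$ are the vertices of $T$. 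It therefore suffices to show
\begin{equation*}
\|\bm{q}_z\|_{0,\infty,\omega_z}\lesssim h_z^{-1}\|\bm{q}_h\|_{0,\omega_z}
\end{equation*}
uniformly for each vertex $\bm{z}$ of $T$, since $\overline{T}\subset\omega_z$.

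Next, fix a vertex $\bm{z}$ and map $\omega_z$ to the reference patch $\hat{\omega}_z:=\{(\bm{x}-\bm{z})/h_z:\bm{x}\in\omega_z\}$ of unit size. The scaled polynomial $\hat{\bm{q}}_z(\hat{\bm{x}}):=\bm{q}_z(\bm{z}+h_z\hat{\bm{x}})$ is the unique minimizer on $\hat{\omega}_z$ of the corresponding LS functional, whose right-hand side consists of the scale-invariant RT degrees of freedom $\CN_{\hat{e}}^{j}(\hat{\bm{q}}_h)$ and $\CN_{\hat{T}}^{lm}(\hat{\bm{q}}_h)$ evaluated on $\hat{\bm{q}}_h(\hat{\bm{x}}):=\bm{q}_h(\bm{z}+h_z\hat{\bm{x}})$. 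Since the LS solution is assumed unique and since the minimum angle condition restricts $\hat{\omega}_z$ to a compact family of admissible geometries, the data-to-solution map has operator norm bounded uniformly in $\bm{z}$ and $h$, giving
\begin{equation*}
\|\hat{\bm{q}}_z\|_{0,\infty,\hat{\omega}_z}\lesssim\max_{\hat{e},j}|\CN_{\hat{e}}^{j}(\hat{\bm{q}}_h)|+\max_{\hat{T},l,m}|\CN_{\hat{T}}^{lm}(\hat{\bm{q}}_h)|.
\end{equation*}

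Each degree of freedom on the right is bounded by $\|\hat{\bm{q}}_h\|_{0,\infty,\hat{T}^\prime}$ for the relevant reference triangle $\hat{T}^\prime\subset\hat{\omega}_z$; an inverse inequality on $\hat{T}^\prime$ followed by the change of variables $\bm{x}=\bm{z}+h_z\hat{\bm{x}}$ turns this into $\lesssim h_z^{-1}\|\bm{q}_h\|_{0,\omega_z}$. Pulling back to $\omega_z$ and combining with the first step, together with $h_T\approx h_z$ under quasi-uniformity, proves the theorem. The main obstacle is establishing uniformity of the LS stability constant over all admissible patches: this requires identifying the finite list of combinatorial patch types permitted by the MAC and then invoking continuity of the pseudo-inverse of $\hat{\bm{A}}_z^\intercal\hat{\bm{A}}_z$ on the compact parameter space of geometries within each type.
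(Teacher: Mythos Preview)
Your proposal is correct and follows essentially the same route as the paper's proof: reduce $\|R_h^r\bm{q}_h\|_{0,T}$ to the local fits $\bm{q}_z$ at the vertices, scale to a unit-size reference patch, and invoke a compactness argument (finitely many patch topologies allowed by the minimum angle condition, each parametrized by a compact set of geometries) to obtain a uniform lower bound on the smallest singular value of the scaled LS matrix $\hat{\bm{A}}_z$, then finish with an inverse inequality. The paper phrases the compactness step via $\sigma_{\min}(\hat{\bm{A}}_z)=\mathrm{dist}(\hat{\bm{A}}_z,S_{N_1\times N_2})$ being continuous on the compact set $\mathcal{A}_z$ of admissible scaled matrices, which is exactly your ``continuity of the pseudo-inverse on the compact parameter space''; your explicit treatment of the convex combinations at edge and interior nodes is a detail the paper suppresses.
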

\begin{proof}
For $\bm{z}\in\CN_h,$ Let $\sigma_{\min}$ and $\sigma_{\max}$ be the minimum and maximum singular values of $\hat{\bm{A}}_z$ respectively. The goal is to show that $\sigma_{\min}$ is uniformly bounded away from $0$. MAC implies $\#\Th(\omega_z)\leq N_{\max}=2\pi/\Theta$. Hence it suffices to consider the case $\#\Th(\omega_z)=N$ for some fixed $N\leq N_{\max}$. In this case, $\#\Eh(\omega_z)=2N$. Let $N_1=2N, N_2=6$ provided $r=0$ and $N_1=6N, N_2=12$ provided $r=1$. Let $M_{N_1\times N_2}$ and $S_{N_1\times N_2}$ be the set of $N_1\times N_2$ matrices and $N_1\times N_2$ rank-deficient matrices, respectively. It is well known that $\sigma_{\min}=\text{dist}(\hat{\bm{A}}_z,S_{N_1\times N_2})$, the distance (measured by matrix $2$-norm) from $\hat{\bm{A}}_z$ to rank-deficient matrices. $\text{dist}(\cdot,S_{N_1\times N_2})$ is continuous on $M_{N_1\times N_2}$. Recall that $\hat{\bm{A}}_z$ is the scaled LS coefficient matrix determined by $\omega_z$. Consider all possible $\omega_z$ and define
$$\mathcal{A}_z=\{\hat{\bm{A}}_z\in M_{N_1\times N_2}: \#\Th(\omega_z)=N, \omega_z \text{ satisfies MAC} \}.$$
Clearly $\mathcal{A}_z$ is a compact set in $M_{N_1\times N_2}$ and any $\hat{\bm{A}}_z\in\mathcal{A}_z$ is of full rank by the uniqueness assumption. Hence $\sigma_{\min}=\text{dist}(\hat{\bm{A}}_z,S_{N_1\times N_2})\geq C_{1}>0$, where $C_{1}$ depends only on the minimum angle $\Theta$. The maximum singular value $\sigma_{\max}\leq C_{2}$, where $C_{2}$ only depends on $\Omega$. For $\bm{q}_{h}\in\mathcal{Q}_{h}^r$,
\begin{equation}\label{magc}
\begin{aligned}
|\hat{\bm{c}}_z|&\leq\|(\hat{\bm{A}}_z^{\intercal}\hat{\bm{A}}_z)^{-1}\|_{2}|\hat{\bm{A}}_z^{\intercal}\bm{d}_z|\leq\sigma_{\min}^{-2}\sigma_{\max}|\bm{d}_z|\\
&\leq C_{1}^{-2}C_{2}\|\bm{q}_{h}\|_{0,\infty,\omega_{z}}
\lesssim h_z^{-1}\|\bm{q}_{h}\|_{0,\omega_z},
\end{aligned}
\end{equation} 
where $|\cdot|$ is the Euclidean norm. 
Finally by \eqref{magc}, we have
\begin{equation*}
\|R_h^r\bm{q}_{h}\|_{0,T}\lesssim h\|R_h^r\bm{q}_{h}\|_{0,\infty,T}\lesssim h|\hat{\bm{c}}_z|\lesssim\|\bm{q}_{h}\|_{0,\omega_T},
\end{equation*}
which completes the proof.
\qed\end{proof}

The super-approximation property of $R_h$ follows from the uniqueness and boundedness results.
\begin{theorem}\label{RTsuperinterp}
For $\bm{q}\in H^{r+2}(\Omega)$, 
\begin{equation*}
\|\bm{q}-R_h^r\bm{q}\|_{0,\Omega}\lesssim h^{r+2}|\bm{q}|_{r+2,\Omega},\quad r=0, 1.
\end{equation*}
\end{theorem}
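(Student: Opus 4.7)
The plan is a standard Bramble--Hilbert argument resting on two ingredients already in hand: the polynomial preservation $R_h^r\bm{q}^\ast=\bm{q}^\ast$ on each $T\in\Th$ for every $\bm{q}^\ast\in\CP_{r+1}(\omega_T)^2$, and the local stability from Theorem~\ref{RTboundedness}. The preservation is immediate: since $\omega_z\subset\omega_T$ for every vertex $z$ of $T$, the restriction $\bm{q}^\ast|_{\omega_z}$ lies in $\CP_{r+1}(\omega_z)^2$ and hence is itself the (unique, by assumption) LS minimizer at $z$, so $R_h^r\bm{q}^\ast(\bm{z})=\bm{q}^\ast(\bm{z})$ at each vertex of $T$; the barycentric formulas in Steps~2--3 then propagate this agreement to every Lagrange node of $T$, and unisolvence in $\CP_{r+1}(T)$ finishes.

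For each $T\in\Th$, I would apply the Deny--Lions lemma on $\omega_T$ to produce a $\bm{q}^\ast\in\CP_{r+1}(\omega_T)^2$ satisfying
\begin{equation*}
\|\bm{q}-\bm{q}^\ast\|_{0,\omega_T}+h\,|\bm{q}-\bm{q}^\ast|_{r+1,\omega_T}\lesssim h^{r+2}|\bm{q}|_{r+2,\omega_T}.
\end{equation*}
By linearity and preservation, on $T$ we have $\bm{q}-R_h^r\bm{q}=(\bm{q}-\bm{q}^\ast)-R_h^r(\bm{q}-\bm{q}^\ast)$. The triangle inequality then reduces the theorem to bounding $\|R_h^r(\bm{q}-\bm{q}^\ast)\|_{0,T}$ by $h^{r+2}|\bm{q}|_{r+2,\omega_T}$, after which the final estimate follows by squaring, summing over $T$, and exploiting the bounded overlap of the family $\{\omega_T\}_{T\in\Th}$ (a consequence of the minimum angle condition).

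To handle $\|R_h^r(\bm{q}-\bm{q}^\ast)\|_{0,T}$, I would exploit that $R_h^r$ sees only the $RT_r$ degrees of freedom, so $R_h^r=R_h^r\Pi_h^r$. Theorem~\ref{RTboundedness} applied to $\bm{q}_h=\Pi_h^r(\bm{q}-\bm{q}^\ast)\in\CQ_h^r$ yields
\begin{equation*}
\|R_h^r(\bm{q}-\bm{q}^\ast)\|_{0,T}\lesssim\|\Pi_h^r(\bm{q}-\bm{q}^\ast)\|_{0,\omega_T}\leq\|\bm{q}-\bm{q}^\ast\|_{0,\omega_T}+\|(\mathrm{id}-\Pi_h^r)(\bm{q}-\bm{q}^\ast)\|_{0,\omega_T}.
\end{equation*}
The first summand is $O(h^{r+2}|\bm{q}|_{r+2,\omega_T})$ directly; the second is $O(h^{r+1}|\bm{q}-\bm{q}^\ast|_{r+1,\omega_T})$ by the interpolation estimate \eqref{approxRTa}, which in turn is $O(h^{r+2}|\bm{q}|_{r+2,\omega_T})$ by the Deny--Lions bound above.

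The only subtlety is bookkeeping: Theorem~\ref{RTboundedness} is formulated for arguments in $\CQ_h^r$, whereas $\bm{q}-\bm{q}^\ast$ is smooth. The factorization $R_h^r=R_h^r\Pi_h^r$ is what legitimately lets me apply the stated stability result; alternatively, one could observe that the proof of Theorem~\ref{RTboundedness} goes through verbatim for any $\bm{q}$ whose degrees of freedom $\mathcal{N}_e^j(\bm{q})$ and $\mathcal{N}_T^{lm}(\bm{q})$ make sense and are controlled by $\|\bm{q}\|_{0,\infty,\omega_z}$. Either path is routine, so I do not expect a genuine obstacle beyond verifying that the uniqueness hypothesis (invoked globally in the paper) is what makes the constant in Theorem~\ref{RTboundedness} depend only on shape regularity, which is exactly the dependence quoted in the statement.
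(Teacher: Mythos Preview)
Your argument is correct and follows essentially the same route as the paper's proof: polynomial preservation of $R_h^r$, the factorization $R_h^r=R_h^r\Pi_h^r$, and the local stability of Theorem~\ref{RTboundedness}. The only differences are cosmetic: the paper takes $\bm{q}^\ast$ to be the Lagrange interpolant on a single triangle $T_1\supset\omega_T$ (avoiding any question about the shape of $\omega_T$ in Deny--Lions) and bounds $\|\Pi_h^r(\bm{q}^\ast-\bm{q})\|_{0,\omega_T}$ through the $L^\infty$ stability \eqref{Linfty} rather than your split via \eqref{approxRTa}; also note that your displayed Deny--Lions bound overstates the seminorm control for $r=1$ (one only gets $|\bm{q}-\bm{q}^\ast|_{r+1,\omega_T}\lesssim h\,|\bm{q}|_{r+2,\omega_T}$), but since you actually combine it with the factor $h^{r+1}$ from \eqref{approxRTa}, the conclusion $O(h^{r+2})$ is unaffected.
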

\begin{proof}
Let $T=\overline{\bm{z}_1\bm{z}_2\bm{z}_3}\in\Th$ and $T_1\subset\overline{\Omega}$ be a smallest local triangle containing $\omega_T$. Let $\bm{q}_{r+1}\in\mathcal{P}_{r+1}(T_1)^2$ be the degree-$(r+1)$ local Lagrange interpolant of $\bm{q}$ using based on $T_1$. By the uniqueness assumption, $R_h^r\bm{q}_{r+1}=\bm{q}_{r+1}$ on $T$.  It then follows from $R_h^r\Pi_h^r=R_h^r$ that 
\begin{equation}\label{Rh1}
\|\bm{q}-R_h^r\bm{q}\|_{0,T}\leq\|\bm{q}-\bm{q}_{r+1}\|_{0,T}+\|R_h^r\Pi_h^r(\bm{q}_{r+1}-\bm{q})\|_{0,T}.
\end{equation}
Using the boundedness from Theorem \ref{RTboundedness}, the stability in \eqref{Linfty}, and \eqref{feinterp},
\begin{equation}\label{Rh2}
\begin{aligned}
&\|R_h^r\Pi_h^r(\bm{q}_{r+1}-\bm{q})\|_{0,T}\lesssim\|\Pi_h^r(\bm{q}_{r+1}-\bm{q})\|_{0,\omega_T}\\
&\quad\lesssim h\|\Pi_h^r(\bm{q}_{r+1}-\bm{q})\|_{0,\infty,\omega_T}\lesssim h\|\bm{q}_{r+1}-\bm{q}\|_{0,\infty,\omega_T}\lesssim h^{r+2}|\bm{q}|_{r+2,T_1}.
\end{aligned}
\end{equation}
Combining \eqref{Rh1}, \eqref{Rh2} and the shape regularity $\Th$ completes the proof.
\qed\end{proof}
In the end, we present the superconvergent recovery estimate.
\begin{theorem}\label{RTsuper}
Assume that $\mathcal{T}_{h}$ satisfies the $(\alpha,\beta)$-condition. Then
\begin{equation*}
\|\bm{p}-R_h^r\bm{p}_{h}^r\|_{0,\Omega}\lesssim
h^{r+1+\min(\frac{1}{2},\alpha,\frac{\beta}{2})}\big(|\bm{p}|_{r+1,\infty,\Omega}+\|\bm{p}\|_{r+2,\Omega}\big),\quad r=0, 1.
\end{equation*}
\end{theorem}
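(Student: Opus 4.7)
The plan is to split the error with the triangle inequality and exploit the identity $R_h^r \Pi_h^r = R_h^r$, which holds because $R_h^r$ is determined entirely by the $RT_r$ degrees of freedom. Writing
\begin{equation*}
\bm{p} - R_h^r \bm{p}_h^r = (\bm{p} - R_h^r \bm{p}) + R_h^r\bigl(\Pi_h^r \bm{p} - \bm{p}_h^r\bigr),
\end{equation*}
each term can be matched with one of the pillars already in place: the first is an approximation error of the recovery operator applied to the exact solution, and the second is the recovery operator applied to the supercloseness quantity $\bm{\xi}_h = \Pi_h^r\bm{p} - \bm{p}_h^r$.

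For the first term, Theorem \ref{RTsuperinterp} immediately yields $\|\bm{p}-R_h^r \bm{p}\|_{0,\Omega} \lesssim h^{r+2} |\bm{p}|_{r+2,\Omega}$. For the second term, I would argue locally: on each $T \in \Th$, the boundedness result of Theorem \ref{RTboundedness} gives
\begin{equation*}
\|R_h^r\bm{\xi}_h\|_{0,T} \lesssim \|\bm{\xi}_h\|_{0,\omega_T},
\end{equation*}
and squaring, summing over $T$, and using the uniformly bounded overlap of the patches $\{\omega_T\}$ (a consequence of shape regularity and the minimum angle condition) produces $\|R_h^r\bm{\xi}_h\|_{0,\Omega} \lesssim \|\bm{\xi}_h\|_{0,\Omega}$. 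For $r=1$ the supercloseness Theorem \ref{RTsuperclosep} bounds the right-hand side by $h^{2+\min(1/2,\alpha,\beta/2)}(|\bm{p}|_{2,\infty,\Omega}+\|\bm{p}\|_{3,\Omega})$, and for $r=0$ I would appeal to the analogous $RT_0$ supercloseness estimate from \cite{YL2018}, which provides the corresponding bound with exponent $1+\min(1/2,\alpha,\beta/2)$.

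Adding the two contributions, the interpolation term $h^{r+2}$ is absorbed into the supercloseness term because $\min(1/2,\alpha,\beta/2) \leq 1/2 < 1$ and $h<1$, so $h^{r+2} \leq h^{r+1+\min(1/2,\alpha,\beta/2)}$. This yields the claimed estimate. There is no real obstacle here: the heavy lifting has already been carried out in Theorems \ref{RTsuperclosep}, \ref{RTboundedness}, and \ref{RTsuperinterp} (plus the earlier $RT_0$ supercloseness result). The only step that warrants an explicit line is the passage from the local boundedness $\|R_h^r\bm{\xi}_h\|_{0,T} \lesssim \|\bm{\xi}_h\|_{0,\omega_T}$ to the global bound via finite overlap of the patches, which is standard under shape regularity.
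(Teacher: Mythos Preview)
Your proposal is correct and mirrors the paper's own proof: the same triangle inequality splitting $\bm{p}-R_h^r\bm{p}_h^r = (\bm{p}-R_h^r\bm{p}) + R_h^r(\Pi_h^r\bm{p}-\bm{p}_h^r)$ is used, followed by Theorems \ref{RTsuperinterp}, \ref{RTboundedness}, and \ref{RTsuperclosep} (with \cite{YL2018} for $r=0$). Your explicit mention of the finite-overlap argument to pass from local to global boundedness and the absorption of the $h^{r+2}$ term are just expanded details of what the paper leaves implicit.
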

\begin{proof}
The theorem follows from 
$$\|\bm{p}-R_h^r\bm{p}_{h}^r\|_{0,\Omega}\leq\|\bm{p}-R_h^r\bm{p}\|_{0,\Omega}+\|R_h^r(\Pi_h^r\bm{p}-\bm{p}_{h}^r)\|_{0,\Omega},$$
Theorems \ref{RTboundedness} and \ref{RTsuperinterp}, Theorem \ref{RTsuperclosep}($r=1$) or Theorem 4.5($r=0$) in \cite{YL2018}.
\qed\end{proof}

\section{Proof of Lemma \ref{RT1err2}}\label{Proof}
The following elementary triangular identities hold:
\begin{equation}\label{elem}
\begin{aligned}
&\cos\theta_{k}=(\ell_{k-1}^{2}+\ell_{k+1}^{2}-\ell_{k}^{2})/(2\ell_{k-1}\ell_{k+1}),
\quad\sin\theta_{k}=\ell_{k}/d,\quad d_k=\ell_{k-1}\ell_{k+1}/d,\\
&\bm{n}_{k-1}=-\sin\theta_{k+1}\bm{t}_{k}-\cos\theta_{k+1}\bm{n}_{k},\quad
\bm{n}_{k+1}=\sin\theta_{k-1}\bm{t}_{k}-\cos\theta_{k-1}\bm{n}_{k},\\
&\dtkm^2{}=\cos^{2}\theta_{k+1}\dtk^2{}-2\cos\theta_{k+1}\sin\theta_{k+1}\dtnk{}+\sin^{2}\theta_{k+1}\dnk^2{},\\
&\dtkp^2{}=\cos^{2}\theta_{k-1}\dtk^2{}+2\cos\theta_{k-1}\sin\theta_{k-1}\dtnk{}+\sin^{2}\theta_{k-1}\dnk^2{}.
\end{aligned}
\end{equation}
For each edge $e_k$, we define several associated geometric quantities $\{\alpha_{jl,k}^i\}_{1\leq i,j,l\leq2}$
\begin{equation*}
\begin{aligned}
&\alpha^1_{11,k}
=\frac{1}{24d\ell_{k}^{2}}\ell_{k-1}\ell_{k+1}\big(3\ell_{k}^{4}-(\ell_{k-1}^{2}-\ell_{k+1}^{2})^{2}\big),\\
&\alpha^1_{12,k}=\alpha^1_{21,k}=
\frac{1}{12d^{2}\ell_{k}}\ell_{k-1}^{2}\ell_{k+1}^{2}(\ell_{k-1}^{2}-\ell_{k+1}^{2}),
\quad\alpha^1_{22,k}=-\frac{1}{6d^{3}}\ell_{k+1}^{3}\ell_{k-1}^{3},\\
&\alpha^2_{11,k}
=\frac{1}{48\ell_{k}^{3}}(\ell_{k-1}^{2}-\ell_{k+1}^{2})\big(9\ell_{k}^{4}-(\ell_{k-1}^{2}-\ell_{k+1}^{2})^{2}\big),\\
&\alpha^2_{12,k}=\alpha^2_{21,k}
=-\alpha^1_{11,k},\quad
\alpha^2_{22,k}=-\alpha^1_{12,k}.
\end{aligned}
\end{equation*}
To prove Lemma \ref{RT1err2}, we introduce cubic bubble functions 
\begin{equation*}
\psi_{0}=\lambda_{1}\lambda_{2}\lambda_{3},\quad\psi_{k}=\lambda_{k-1}\lambda_{k+1}(\lambda_{k-1}-\lambda_{k+1}),\quad1\leq k\leq3.
\end{equation*} 
By counting the dimension, it is clear that $\{\psi_{k}\}_{k=0}^{3}$ can span polynomials in $\mathcal{P}_{3}(T)$ that vanish at $\{\bm{z}_k\}_{k=1}^3$ and midpoints of $\{e_{k}\}_{k=1}^{3}$. In fact, $\{\psi_{k}\}_{k=0}^{3}$ has been used to derive superconvergence of quadratic Lagrange elements (cf.\cite{HuangXu2008}) and a posteriori error estimators (cf.\cite{BaXuZheng2007}). 
\begin{lemma}\label{RT1err1}
For $\bm{p}_{2}\in\mathcal{P}_{2}(T)^{2}$,
\begin{equation*}
\bm{p}_{2}-\Pi_h^1\bm{p}_{2}=\nabla^\perp w,
\end{equation*}
where
\begin{equation*}
w=\alpha^i_{jl,\beta}\DD_{i,\beta}^{jl}(\bm{p}_{2})\psi_{0}
+\sum_{k=1}^{3}\frac{\ell_k^3}{12}\DD_{2,k}^{11}(\bm{p}_{2})\psi_k,\quad\forall1\leq \beta\leq3.
\end{equation*}
\end{lemma}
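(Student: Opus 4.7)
The plan is to first identify $\bm{p}_2-\Pi_h^1\bm{p}_2$ as a rotational gradient $\nabla^\perp w$, then confine $w$ to a four-dimensional bubble subspace, and finally determine its four coefficients. Because $\bm{p}_2\in\mathcal{P}_2(T)^2$ gives $\divg\bm{p}_2\in\mathcal{P}_1(T)$, the commuting relation \eqref{commutative} yields $\divg(\bm{p}_2-\Pi_h^1\bm{p}_2)=(\mathrm{id}-P_h^1)\divg\bm{p}_2=0$; together with $\mathcal{RT}_1(T)\subset\mathcal{P}_2(T)^2$ this provides $w\in\mathcal{P}_3(T)$, unique up to an additive constant, with $\nabla^\perp w=\bm{p}_2-\Pi_h^1\bm{p}_2$.

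Next I would show that, after fixing the constant, $w\in\mathrm{span}\{\psi_0,\psi_1,\psi_2,\psi_3\}$. On each edge $e_k$ the identity $\mathcal{N}_{e_k}^j(\bm{q})=\bm{q}(\bm{g}_j)\cdot\bm{n}_k$ (valid for $\bm{q}\in\mathcal{P}_2(e_k)^2$) forces the quadratic $(\bm{p}_2-\Pi_h^1\bm{p}_2)\cdot\bm{n}_k|_{e_k}$, seen as a function of the arclength $t\in[0,\ell_k]$, to vanish at the two Gauss points and hence to be a scalar multiple of $t^2-\ell_k t+\ell_k^2/6$. Via $\nabla^\perp w\cdot\bm{n}_k=-\dtk w$ the same is true of $\dtk w$ on $e_k$. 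Since $\int_0^{\ell_k}(t^2-\ell_k t+\ell_k^2/6)\,dt=0$, the two endpoint values of $w$ coincide on every edge, so I can set the free constant to make $w$ vanish at every vertex; and since $\int_0^{\ell_k/2}(t^2-\ell_k t+\ell_k^2/6)\,dt=0$ as well, $w$ then also vanishes at every edge midpoint. A dimension count places $w$ in the claimed span.

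Writing $w=A_0\psi_0+\sum_{k=1}^{3}A_k\psi_k$, the three edge coefficients are read off one edge at a time. Since $\psi_0$ and $\psi_j$ for $j\neq k$ vanish on $e_k$, we have $w|_{e_k}=A_k\psi_k|_{e_k}$. The $t^2$-coefficient of $\bm{p}_2\cdot\bm{n}_k$ along $e_k$ equals $\tfrac{1}{2}\bm{n}_k\cdot\dtk^2\bm{p}_2=\tfrac{1}{2}\DD_{2,k}^{11}(\bm{p}_2)$, while $\Pi_h^1\bm{p}_2\cdot\bm{n}_k|_{e_k}\in\mathcal{P}_1(e_k)$ contributes nothing to that coefficient. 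Integrating $\dtk w=-\tfrac{1}{2}\DD_{2,k}^{11}(\bm{p}_2)(t^2-\ell_k t+\ell_k^2/6)$ along $e_k$ from the vertex at $t=0$ and matching against the barycentric form $\psi_k|_{e_k}=-t(t-\ell_k)(2t-\ell_k)/\ell_k^3$ yields $A_k=\ell_k^3\DD_{2,k}^{11}(\bm{p}_2)/12$.

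The main obstacle is pinning down $A_0$. Because $\psi_0$ vanishes identically on $\partial T$, $A_0$ is invisible to every edge moment, and a direct computation shows that the two cell moments $\int_T(\bm{p}_2-\Pi_h^1\bm{p}_2)=\mathbf{0}$ are automatically satisfied once $w$ lies in the bubble span, via $\int_{\partial T}w\bm{t}=\sum_k\bm{t}_k\int_{e_k}w=\mathbf{0}$ and $\int_0^1 s(1-s)(1-2s)\,ds=0$. Hence $A_0$ must be recovered from the pointwise identity $\nabla^\perp w=\bm{p}_2-\Pi_h^1\bm{p}_2$ in the interior. I would fix an edge $e_\beta$ and use $\partial_{\bm{n}_\beta}w|_{e_\beta}=\nabla^\perp w\cdot\bm{t}_\beta|_{e_\beta}=(\bm{p}_2-\Pi_h^1\bm{p}_2)\cdot\bm{t}_\beta|_{e_\beta}$; expanding the left-hand side as $A_0\partial_{\bm{n}_\beta}\psi_0|_{e_\beta}+\sum_{k=1}^{3}A_k\partial_{\bm{n}_\beta}\psi_k|_{e_\beta}$ (with the normal derivatives of the $\lambda_j$ supplied by \eqref{elem}) and computing the right-hand side by Taylor-expanding $\bm{p}_2\cdot\bm{t}_\beta$ in $t$ and subtracting the corresponding component of the $\mathcal{RT}_1$-interpolant, one obtains a polynomial identity on $e_\beta$ whose coefficient of $\lambda_{\beta-1}\lambda_{\beta+1}$ isolates $A_0$. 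Re-expressing every mixed second derivative in the $(\bm{t}_\beta,\bm{n}_\beta)$-frame via \eqref{elem} yields $A_0=\alpha^i_{jl,\beta}\DD_{i,\beta}^{jl}(\bm{p}_2)$; the uniqueness of $w$ then forces this formula to be independent of $\beta$, so the verification is needed only for a single $\beta$. Extracting the explicit geometric coefficients $\alpha^i_{jl,\beta}$ through these identities is the principal computational burden of the proof.
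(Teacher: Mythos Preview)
Your reduction to $w\in\mathrm{span}\{\psi_0,\psi_1,\psi_2,\psi_3\}$ and your edge-based computation of the coefficients $A_k=\ell_k^3\DD_{2,k}^{11}(\bm{p}_2)/12$ for $k=1,2,3$ are correct; this is equivalent to (and arguably more transparent than) the paper's route, which invokes Lemma~\ref{moment2nodal} directly and then reads off $c_k$ from the identity $\bm{d}^\perp\cdot\partial_{\bm d}^2\bm{p}_2=\partial_{\bm d}^3w$ with $\bm d=\bm t_k$.

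The gap is in your determination of $A_0$. Your plan is to match $\partial_{\bm n_\beta}w|_{e_\beta}=(\bm p_2-\Pi_h^1\bm p_2)\cdot\bm t_\beta|_{e_\beta}$, but the right-hand side requires the \emph{tangential} trace of $\Pi_h^1\bm p_2$ on $e_\beta$, which is not a degree of freedom: unlike the normal trace (which lies in $\mathcal P_1(e_\beta)$ and is fixed by the two edge moments), the tangential trace is a genuine quadratic depending on the DOFs on all three edges and on the cell moments. ``Subtracting the corresponding component of the $\mathcal{RT}_1$-interpolant'' therefore means computing $\Pi_h^1\bm p_2$ in full via the shape functions, which you have not done and which would be a substantial additional calculation before any $\alpha^i_{jl,\beta}$ can be extracted.

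The paper avoids this entirely by exploiting the structure of $\mathcal{RT}_1$: from \eqref{formRT} the second-order part of any $\bm q\in\mathcal{RT}_1(T)$ is $(x_1,x_2)^\intercal$ times a linear function, so $\partial_{\bm d}^2\bm q$ is a scalar multiple of $\bm d$ for every unit vector $\bm d$, and hence $\bm d^\perp\cdot\partial_{\bm d}^2\Pi_h^1\bm p_2=0$. Applying $\bm d^\perp\cdot\partial_{\bm d}^2$ to $\bm p_2-\Pi_h^1\bm p_2=\nabla^\perp w$ then yields the scalar identity $\bm d^\perp\cdot\partial_{\bm d}^2\bm p_2=\partial_{\bm d}^3 w$, which involves only $\bm p_2$ and $w$ and never the interpolant. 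Choosing $\bm d=\bm n_\beta$ gives a single linear equation for $c_0$ in terms of the already-known $c_1,c_2,c_3$ and third normal derivatives of the $\psi_k$ (formula \eqref{intermediate}), from which the $\alpha^i_{jl,\beta}$ follow by \eqref{elem}. This one observation is what turns the computation of $A_0$ from an implicit interpolation problem into a short explicit formula.
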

\begin{proof}
By $\Pi_h^1(\bm{p}_{2}-\Pi_h^1\bm{p}_{2})=0$ and using Lemma \ref{moment2nodal}, we have
\begin{equation}\label{ppq}
\bm{p}_{2}-\Pi_h^1\bm{p}_{2}=\nabla^\perp\big(\sum_{k=0}^{3}c_k\psi_k\big).
\end{equation}
For a unit vector $\bm{d}$ and the directional derivative $\partial_{\bm{d}}$, the definition of $\mathcal{RT}_{1}(T)$ 
implies that $\dd^2{\Pi_h^1\bm{p}_{2}}$ is proportional to $\bm{d}$. Then applying $\bm{d}^\perp\cdot\dd^2$ to \eqref{ppq} gives
\begin{equation}\label{generald}
\bm{d}^\perp\cdot\dd^2{\bm{p}_{2}}=\sum_{k=0}^{3}c_k\dd^3\psi_k.
\end{equation}
By direct calculation,
\begin{subequations}\label{3rddd}
\begin{align}
&\partial_{\bm{d}}^{3}\psi_{0}=6\partial_{\bm{d}}\lambda_{1}\partial_{\bm{d}}\lambda_{2}\partial_{\bm{d}}\lambda_{3},\\
&\partial_{\bm{d}}^{3}\psi_k=6\partial_{\bm{d}}\lambda_{k-1}\partial_{\bm{d}}\lambda_{k+1}(\partial_{\bm{d}}\lambda_{k-1}-\partial_{\bm{d}}\lambda_{k+1}),
\quad1\leq k\leq3.
\end{align}
\end{subequations}
In particular, $\dtk^3\psi_0=0$ and $\dtk^3\psi_j=-12\delta_{jk}/\ell_k^3.$
By \eqref{3rddd} and \eqref{generald} with $\bm{d}=\bm{t}_k$, we have 
\begin{equation}\label{alphak}
c_{k}=\frac{\ell_{k}^{3}}{12}\bm{n}_{k}\cdot\dtk^2{\bm{p}_{2}}=\frac{\ell_{k}^{3}}{12}\DD_{2,k}^{11}(\bm{p}_{2}),\quad1\leq k\leq 3.
\end{equation}
It remains to determine $c_{0}$. \eqref{generald} with $\bm{d}=\bm{n}_{k}$ implies that
\begin{equation}\label{alpha01st}
\DD_{1,k}^{22}(\bm{p}_{2})=c_{0}\dnk^3{\psi_{0}}+c_{k}\dnk^3{\psi_{k}}
+c_{k-1}\dnk^3{\psi_{k-1}}+c_{k+1}\dnk^3{\psi_{k+1}},
\end{equation}
By 
$\dnk{\lambda_{k}}=-1/d_{k}, \dnk{\lambda_{k+1}}=\cos\theta_{k-1}/d_{k+1}, 
\dnk{\lambda_{k-1}}=\cos\theta_{k+1}/d_{k-1},$
\eqref{3rddd} with $\bm{d}=\bm{n}_k$, \eqref{alphak}, and 
\eqref{alpha01st}, we obtain
\begin{equation}\label{intermediate}
\begin{aligned}
c_{0}&=-\frac{d_{k-1}d_{k}d_{k+1}}{6\cos\theta_{k-1}\cos\theta_{k+1}}\DD_{1,k}^{22}(\bm{p}_{2})\\
&+\frac{\ell_{k}^{3}}{12}d_{k}
\left(\frac{\cos\theta_{k+1}}{d_{k-1}}-\frac{\cos\theta_{k-1}}{d_{k+1}}\right)\DD_{2,k}^{11}(\bm{p}_{2})\\
&-\frac{\ell_{k-1}^{3}}{12}\frac{d_{k-1}}{\cos\theta_{k+1}}
\left(\frac{1}{d_{k}}+\frac{\cos\theta_{k-1}}{d_{k+1}}\right)\DD_{2,k-1}^{11}(\bm{p}_{2})\\
&+\frac{\ell_{k+1}^{3}}{12}\frac{d_{k+1}}{\cos\theta_{k-1}}
\left(\frac{1}{d_{k}}+\frac{\cos\theta_{k+1}}{d_{k-1}}\right)\DD_{2,k+1}^{11}(\bm{p}_{2}).
\end{aligned}
\end{equation}
Then using \eqref{elem} and \eqref{intermediate}, we obtain 
$c_{0}=\alpha^i_{jl,k}\DD_{i,k}^{jl}(\bm{p}_{2}), 1\leq k\leq3.$
\qed\end{proof}

Now we can prove Lemma \ref{RT1err2}. In the proof, we shall use the integral formula
\begin{equation}\label{intbary}
\int_{T}\lambda_1^{m_1}\lambda_2^{m_2}\lambda_3^{m_3}=\frac{2|T|m_1!m_2!m_3!}{(m_1+m_2+m_3+2)!},\quad\int_e\lambda_1^{m_1}\lambda_2^{m_2}=\frac{|e|m_1!m_2!}{(m_1+m_2+1)!},
\end{equation}
where $\lambda_1, \lambda_2$ are barycentric coordinates w.r.t.~the edge $e$.
\begin{proof}
Using \eqref{IP} and Lemma \ref{RT1err1}, we have
\begin{equation}\label{IntegrationByParts}
\begin{aligned}
\int_T(\bm{p}_{2}-\Pi_h^1\bm{p}_{2})\cdot\nabla^\perp w_{2}
=\sum_{k=1}^3\int_{e_{k}}w\nabla^\perp w_2\cdot\bm{t}_k
-\int_Tw\Delta w_{2}:=I+II.
\end{aligned}
\end{equation}
Recall that $\phi_k=\lambda_{k-1}\lambda_{k+1}$ and let $I_h$ be the linear interpolation.
Then using the hierarchical representation
\begin{equation}\label{hierarchy}
w_{2}-I_hw_{2}=-\frac{1}{2}\sum_{k=1}^{3}\ell_{k}^{2}\phi_k\dtk^2{w_2},
\end{equation}
and $\Delta\phi_k=2\nabla\lambda_{k-1}\cdot\nabla\lambda_{k+1}=-2\cos\theta_k/(d_{k-1}d_{k+1})$,
we obtain
\begin{equation}\label{Laplacian}
\Delta w_2=\frac{1}{4|T|^{2}}\sum_{k=1}^{3}\ell_{k}^{2}\ell_{k-1}
\ell_{k+1}\cos\theta_{k}\dtk^2w_2.
\end{equation}

It then follows from Lemma \ref{RT1err1}, \eqref{Laplacian}, and $\int_T\psi_{0}=|T|/60,$ $\int_T\psi_{k}=0, 1\leq k\leq 3,$ that
\begin{equation}\label{RT12ndterm}
\begin{aligned}
&II=-\frac{|T|}{60}c_0\Delta w_{2}=-\frac{1}{240|T|}\sum_{k=1}^{3}c_{0}\ell_{k}^{2}\ell_{k-1}\ell_{k+1}\cos\theta_k\dtk^2{w_{2}}\\
&\qquad=-\frac{1}{120}\sum_{k=1}^{3}\int_{e_{k}}\alpha^i_{jl,k}\DD_{i,k}^{jl}(\bm{p}_{2})\ell_{k}\cot\theta_k\dtk^2{w_{2}}.
\end{aligned}
\end{equation}
By the elementary identity
$\bm{t}_k=\frac{\cos\theta_{k+1}}{\sin\theta_k}\bm{n}_{k+1}-\frac{\cos\theta_{k-1}}{\sin\theta_k}\bm{n}_{k-1},$
Lemma \ref{RT1err1},
and $\psi_{k}=-\ell_k\dtk{(\phi_k^2)}/2$, we have
\begin{equation}\label{1st}
\begin{aligned}
I&=-\sum_{k=1}^{3}\frac{1}{12}\int_{e_{k}}\ell_{k}^{3}\DD_{2,k}^{11}(\bm{p}_{2})
\psi_{k}\nabla^\perp w_{2}\cdot\left(\frac{\cos\theta_{k-1}}{\sin\theta_{k}}\bm{n}_{k-1}
-\frac{\cos\theta_{k+1}}{\sin\theta_{k}}\bm{n}_{k+1}\right)\\
&=\sum_{k=1}^{3}\frac{1}{24}\int_{e_{k}}\ell_{k}^4\DD_{2,k}^{11}(\bm{p}_{2})
\phi^2_{k}\left(\frac{\cos\theta_{k-1}}{\sin\theta_{k}}\partial_{\bm{t}_{k}\bm{t}_{k-1}}^2w_2
-\frac{\cos\theta_{k+1}}{\sin\theta_{k}}\partial_{\bm{t}_{k}\bm{t}_{k+1}}^2w_2\right).
\end{aligned}
\end{equation}
Then using the quadrature rule \eqref{intbary},
\begin{equation*}
\begin{aligned}
I=\frac{1}{720}\sum_{k=1}^{3}\ell_{k}^5\DD_{2,k}^{11}(\bm{p}_{2})\left(\frac{\cos\theta_{k-1}}{\sin\theta_{k}}\partial_{\bm{t}_{k}\bm{t}_{k-1}}^2w_2-\frac{\cos\theta_{k+1}}{\sin\theta_{k}}\partial_{\bm{t}_{k}\bm{t}_{k+1}}^2w_2
\right).
\end{aligned}
\end{equation*}
In addition, \eqref{hierarchy} gives
\begin{equation*}
\begin{aligned}
\partial_{\bm{t}_{k}\bm{t}_{k-1}}^{2}w_{2}&=-\frac{\ell_{k}}{2\ell_{k-1}}\dtk^2{w_{2}}
+\frac{\ell_{k+1}^2}{2\ell_{k-1}\ell_{k}}\dtkp^2{w_{2}}-\frac{\ell_{k-1}}{2\ell_{k}}\dtkm^2{w_{2}},\\
\partial_{\bm{t}_{k}\bm{t}_{k+1}}^{2}w_{2}&=-\frac{\ell_{k}}{2\ell_{k+1}}\dtk^2{w_{2}}
-\frac{\ell_{k+1}}{2\ell_{k}}\dtkp^2{w_{2}}+\frac{\ell_{k-1}^2}{2\ell_{k}\ell_{k+1}}\dtkm^2{w_{2}}.
\end{aligned}
\end{equation*}
Therefore,
\begin{equation}\label{RT11stterm}
\begin{aligned}
I&=\frac{1}{1440}\sum_{k=1}^{3}\int_{e_{k}}\left\{\frac{\ell_{k}^5}{\sin\theta_k}\DD_{2,k}^{11}(\bm{p}_{2})\left(\frac{\cos\theta_{k+1}}{\ell_{k+1}}-\frac{\cos\theta_{k-1}}{\ell_{k-1}}\right)\right.\\
&\qquad+\frac{\ell_{k-1}^4}{\sin\theta_{k-1}}\DD_{2,k-1}^{11}(\bm{p}_{2})\left(\cos\theta_{k}+\frac{\ell_{k}}{\ell_{k+1}}\cos\theta_{k+1}\right)\\
&\qquad-\left.\frac{\ell_{k+1}^4}{\sin\theta_{k+1}}\DD_{2,k+1}^{11}(\bm{p}_{2})\left(\frac{\ell_{k}}{\ell_{k-1}}\cos\theta_{k-1}+\cos\theta_{k}\right)\right\}\dtk^2w_2
\end{aligned}
\end{equation}
Combining \eqref{IntegrationByParts}, \eqref{RT12ndterm}, \eqref{RT11stterm} and using \eqref{elem}, we obtain Lemma \ref{RT1err2}.
\qed\end{proof}

\section{Numerical experiments}\label{sec6}
\begin{figure}[tbhp]
\centering
\includegraphics[width=12cm,height=5cm]{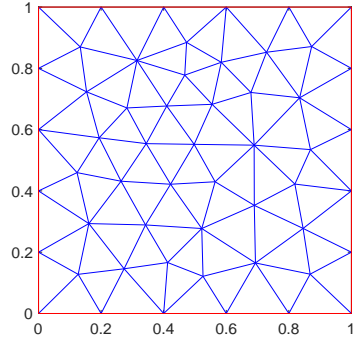}
\caption{Delaunay initial grid on a square.}
\label{initialmesh}
\end{figure}

\begin{figure}[tbhp]
\centering
\includegraphics[width=12cm,height=5.0cm]{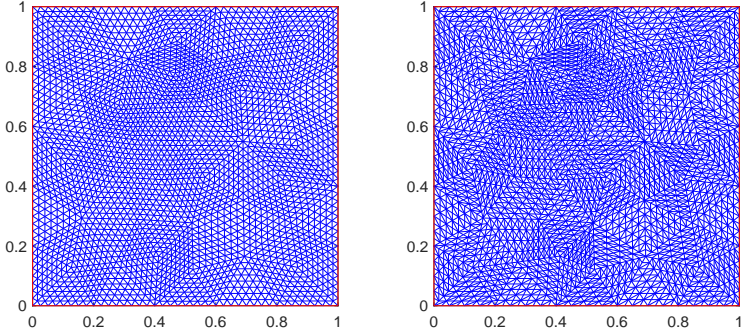}
\caption{(left)Regular refinement, 5504 elements. (right)Newest vertex bisection, 5504 elements.}
\label{refinement}
\end{figure}
We test our recovery operators $R_h^r$ with $r=1,2,3$ by the Poisson equation
\begin{equation*}
-\Delta u=f\text{ in }\Omega,
\end{equation*}
where $\Omega$ and $u$ will be given in the next three experiments. Readers are referred to \cite{YL2018} for numerical results on recovery superconvergence of the $RT_0$ element. 
The experiments are implemented using the iFEM package \cite{iFEM} in Matlab 2018b. In tables, $\|\cdot\|$ is the $L^2$-norm $\|\cdot\|_{0,\Omega}$, `nt' denotes the number of triangles. The order of convergence is $p$ such that error$\approx$ ndof$^{-\frac{p}{2}}$, where ndof is the number of degrees of freedom. The value of $p$ is computed by least squares. 

\textbf{Problem 1.} In the first experiment, let $\Omega$ be the unit square $[0,1]^2$ and  $$u=\exp(x_1+x_2)\sin(2\pi x_1)\sin(\pi x_2)$$ be the exact solution. We test the performance of $R_h^1$. {Due to Theorem \ref{RTuniqueness}, we do not enlarge the patch $\omega_z$ when $z$ is an interior vertex. If $z$ is a boundary vertex, extra neighboring elements are added to $\omega_z$ such that $\#\omega_z\geq8.$ It turns out that all local least squares problems are uniquely solvable.}
We start with the Delaunay triangulation in Fig.~\ref{initialmesh}, and
computed a sequence of meshes by regular refinement, i.e., dividing an element into four similar subelements by connecting the midpoints of each edge, see Table \ref{RT1regular}. We also computed a sequence of meshes by newest vertex bisection (cf.~\cite{Mitchell1990,iFEM}), see Fig.~\ref{refinement} and Table \ref{RT1bisection}. 

For regular refinement, the sequence of grids satisfies $(\alpha,\beta)$-condition with $(\alpha,\beta)=(\infty,1)$. For $RT_{1}$ elements, Theorem \ref{RTsuperclosep} predicts that 
$\|\Pi_h^1\bm{p}-\bm{p}_{h}^1\|=O(h^{2.5})$, which is confirmed by Table \ref{RT1regular}. In view of the high order recovery superconvergence  $\|\bm{p}-R_h^1\bm{p}_{h}^1\|=O(h^{3.4})$, our supercloseness estimate  $\|\bm{p}-R_h^1\bm{p}_{h}^1\|=O(h^{2.5})$ in Theorem \ref{RTsuper} may be suboptimal. 

The sequence of grids created by newest vertex bisection is far from uniformly parallel, i.e., almost no pair of adjacent triangles forms an $O(h^{1+\alpha})$ approximate parallelogram with some positive $\alpha$. Hence there is no supercloseness in Table \ref{RT1bisection}. Surprisingly, we still observe apparent superconvergence for $\|\bm{p}-R_h^1\bm{p}_{h}^1\|$.\\ 

{\textbf{Problem 2:} Although our supercloseness estimates only work for $RT_0$ and $RT_1$ elements, we perform numerical experiments on the recovery operators $R_h^2$ and $R_h^3$ for $RT_2$ and $RT_3$ elements. We use the same $\Omega, u,$ and initial mesh with regular refinement in Problem 1. Local patches $\omega_z$ is chosen in the same way as in Problem 1.
The numerical results are presented in Tables \ref{RT2regular} and \ref{RT3regular}. 

As mentioned in Problem 1, the sequence of grids satisfies $(\alpha,\beta)$-condition with $(\alpha,\beta)=(\infty,1)$. Unlike $RT_0$ and $RT_1$ elements, there is no supercloseness phenomenon for $RT_2$ and $RT_3$ even on regularly refined meshes. However, it can be observed that the rate of recovery superconvergence is at least $\|\bm{p}-R_h^r\bm{p}_h^r\|=O(h^{r+2})$ with $r=2,3$. Therefore, the supercloseness estimate is not a necessary ingredient of superconvergence analysis. We conjecture that the superconvergence is due to a large number of locally symmetric patches, see \cite{SSW1996} for the theory of Lagrange elements.}

\textbf{Problem 3.} Postprocessing superconvergence is often used to develop recovery-type a posteriori error estimator and adaptive FEMs.  
In the end, we test the adaptivity performance of $R_h^1$ on the domain $\Omega=[-1,1]^2\backslash \Omega_0$, where $\Omega_0$ is a right triangle whose smallest angle is $\omega=\pi/24$, see Fig.~\ref{meshadapt}(left). Let
$$u(r,\theta)=r^{\frac{\pi}{2\pi-\omega}}\sin\left(\frac{\pi}{2\pi-\omega}\theta\right)-\frac{r^{2}}{4},$$ 
where $(r,\theta)$ is the polar coordinate. The corresponding source $f=-\Delta u=1$. We use the classical adaptive feedback loop (cf.~\cite{Dorfler1996,MNS2000})
$$\textsf{SOLVE}\rightarrow\textsf{ESTIMATE}\rightarrow\textsf{MARK}
\rightarrow\textsf{REFINE}.$$ 
{It will return a sequence of meshes $\{\mathcal{T}_{h_\ell}\}_{\ell\geq0}$ and numerical solutions $\{\bm{p}_{h_\ell}\}_{\ell\geq0}$. The algorithm starts from the initial grid $\mathcal{T}_{h_0}$ in Fig.~\ref{meshadapt}(left). In the procedure \textsf{ESTIMATE}, $\eta_{\ell,T}=\|R_{h_\ell}^1\bm{p}_{h_\ell}^1-\bm{p}_{h_\ell}^1\|_{0,T}$ serves as a posteriori error estimator on each triangle $T\in\mathcal{T}_{h_\ell}$. The procedure \textsf{MARK} selects a collection of triangles $\mathcal{M}_\ell\subset\mathcal{T}_{h_\ell}$ such that $$\sum_{T\in\mathcal{M}_\ell}\eta_{\ell,T}^2\geq0.3\sum_{T\in\mathcal{T}_{h_\ell}}\eta_{\ell,T}^2.$$
Then the elements in $\mathcal{M}_\ell$ and necessary neighboring elements are refined by local mesh refinement strategy to yield a conforming subtriangulation $\mathcal{T}_{h_{\ell+1}} $ of $\mathcal{T}_{h_{\ell}} $. In particular, we use regular refinement with bisection closure in the procedure \textsf{REFINE}, see Fig.~\ref{meshadapt}(right) for an adaptively  refined triangulation. The numerical results are presented in Fig.~\ref{RT1}. } 

It can be observed that the adaptive algorithm yields optimal rate of convergence and apparent recovery superconvergence. {A distinct feature of the a posteriori error estimator $\eta_{h_\ell}:=\|R_{h_\ell}^1\bm{p}_{h_\ell}^1-\bm{p}_{h_\ell}^1\|_{0,\Omega}=\big(\sum_{T\in\mathcal{T}_{h_\ell}}\eta_{\ell,T}^2\big)^\frac{1}{2}$ is the well-known asymptotic exactness:
\begin{align*}
    \lim_{\ell\rightarrow\infty}\frac{\eta_{h_\ell}}{\|\bm{p}-\bm{p}^1_{h_\ell}\|_{0,\Omega}}=1,
\end{align*}
which can be numerically confirmed using the rates of superconvergence in Fig.~\ref{RT1} with a triangle inequality, see, e.g., \cite{BX2003b,XZ2003} for details.}

\begin{table}[tbhp]
\caption{$RT_{1}$ with regular refinement}
\centering
\begin{tabular}{|c|c|c|c|}
\hline
nt & $\|\bm{p}-\bm{p}_{h}^1\|$
 &$\|\Pi_h^1\bm{p}-\bm{p}_{h}^1\|$
&  $\|\bm{p}-R_h^1\bm{p}_{h}^1\|$ \\
\hline
             86   &3.176e-1	&4.297e-2	&5.186e-1\\
             344     &8.000e-2	&7.852e-3	&5.560e-2\\
             1376     &2.006e-2	&1.397e-3	&5.344e-3\\
             5504      &5.022e-3	&2.461e-4	&4.929e-4\\
             22106       &1.256e-3	&4.336e-5	&4.616e-5\\
\hline
order &1.998&2.501&3.414\\
\hline
\end{tabular}
\label{RT1regular}
\end{table}

\begin{table}[tbhp]
\caption{$RT_{1}$ with bisection refinement}
\centering
\begin{tabular}{|c|c|c|c|}
\hline
nt & $\|\bm{p}-\bm{p}_{h}^1\|$
 &$\|\Pi_h^1\bm{p}-\bm{p}_{h}^1\|$
&  $\|\bm{p}-R_h^1\bm{p}_{h}^1\|$ \\
\hline
             86   &3.176e-1	&4.297e-2	&5.186e-1\\
             344    &1.325e-1	&1.092e-1	&7.453e-2\\
             1376      &3.401e-2	&2.682e-2	&1.005e-2\\
             5504      &8.604e-3	&6.607e-3	&1.610e-3\\
             
             22016          &2.164e-3	&1.637e-3	&3.336e-4\\
\hline
             order   &1.979&2.020&2.605\\
\hline
\end{tabular}
\label{RT1bisection}
\end{table}

\begin{table}[tbhp]
\caption{$RT_2$ with regular refinement}
\centering
\begin{tabular}{|c|c|c|c|}
\hline
nt & $\|\bm{p}-\bm{p}_{h}^2\|$
 &$\|\Pi_h^2\bm{p}-\bm{p}_{h}^2\|$
&  $\|\bm{p}-R_h^2\bm{p}_{h}^2\|$ \\
\hline
             86   &2.378e-2	&5.201e-3	&1.505e-1\\
             344     &3.022e-3	&5.488e-4	&1.005e-2\\
             1376     &3.792e-4	&6.501e-5	&5.247e-4\\
             5504      &4.745e-5	&8.002e-6	&2.551e-5\\
             22106       &5.933e-6	&9.953e-7	&1.351e-6\\
\hline
order &2.993&3.080&4.215\\
\hline
\end{tabular}
\label{RT2regular}
\end{table}

\begin{table}[tbhp]
\caption{$RT_3$ with regular refinement}
\centering
\begin{tabular}{|c|c|c|c|}
\hline
nt & $\|\bm{p}-\bm{p}_{h}^3\|$
 &$\|\Pi_h^3\bm{p}-\bm{p}_{h}^3\|$
&  $\|\bm{p}-R_h^3\bm{p}_{h}^3\|$ \\
\hline
             86   &3.733e-3	&3.394e-3	&4.022e-2\\
             344     &2.359e-4	&2.140e-4	&1.180e-3\\
             1376     &1.478e-5	&1.338e-5	&2.668e-5\\
             5504      &9.242e-7	&8.354e-7	&6.377e-7\\
             22106      &5.777e-8	&5.217e-8&	2.116e-8\\
\hline
order &3.995&3.998&5.257\\
\hline
\end{tabular}
\label{RT3regular}
\end{table}

\begin{figure}[tbhp]
\centering
\includegraphics[width=12cm,height=5.0cm]{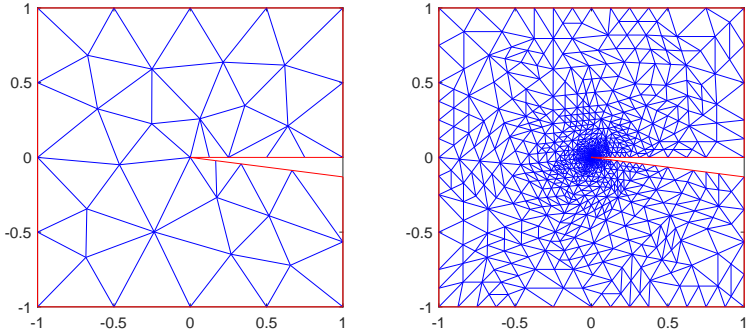}
\caption{(left)Initial grid for the adaptive algorithm. (right)Adaptive grid, 2026 elements.}
\label{meshadapt}
\end{figure}

\begin{figure}[tbhp]
\centering
\includegraphics[width=8cm,height=5cm]{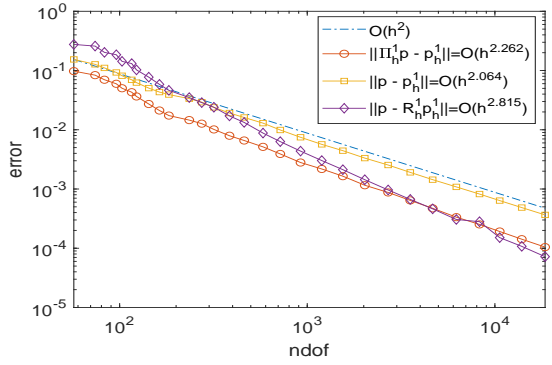}
\caption{Error curves for $RT_{1}$.}
\label{RT1}
\end{figure}

\section{Concluding remarks}
{In this paper, we develop supercloseness estimate for the second lowest order RT element and a family of postprocessing operators $R_h^r$ for higher order RT elements applied to second order elliptic equations. Since both the analysis of supercloseness and postprocessing operators are local, our superconvergence results can be adapted to Neumann and mixed boundary conditions. 
In practice, $R_h^r$ can be extended to 3-dimensional RT elements in a straightforward way although the theoretical analysis in this paper may need significant modifications, e.g., the supercloseness estimate and well-posedness of the local least squares problem would be more complicated. Readers are  also referred to \cite{DK1998} for numerical experiments on a different postprocessing operator for the lowest order RT elements in $\mathbb{R}^3$.}

\end{document}
